\documentclass[11pt,reqno]{amsart}    %字体大小
\usepackage{amsmath,amssymb,latexsym,soul,cite,mathrsfs,enumitem,empheq}
\usepackage{color,enumitem,graphicx}
\usepackage[colorlinks=true,urlcolor=blue,
citecolor=red,linkcolor=blue,linktocpage,pdfpagelabels,
bookmarksnumbered,bookmarksopen]{hyperref}
\usepackage[english]{babel}
\usepackage{enumitem}
\usepackage[left=3.0cm,right=3.0cm,top=2.8cm,bottom=2.8cm]{geometry}

\usepackage[hyperpageref]{backref}
\makeatletter
\newcommand{\leqnomode}{\tagsleft@true}
\newcommand{\reqnomode}{\tagsleft@false}
\makeatother

\numberwithin{equation}{section}

\newtheorem{thm}{Theorem}[section]
\newtheorem{lem}[thm]{Lemma}
\newtheorem{cor}[thm]{Corollary}
\newtheorem{Prop}[thm]{Proposition}

\DeclareMathOperator{\dist}{dist}

\DeclareMathOperator{\spn}{span}

\title[Biharmonic Br\'{e}zis-Nirenberg problem]{Energy asymptotics and blow-up phenomena for biharmonic Br\'{e}zis-Nirenberg problem}

\author[Jiamo Li, Qikai Lu, and Minbo Yang]{Jiamo Li, Qikai Lu*, and Minbo Yang}

\address{Jiamo Li  \newline\indent School of Mathematical Sciences, Zhejiang Normal University, \newline\indent Jinhua, Zhejiang, 321004, People's Republic of China}
\email{lijiamo@zjnu.edu.cn}

\address{Qikai Lu  \newline\indent School of Mathematical Sciences, Zhejiang Normal University, \newline\indent Jinhua, Zhejiang, 321004, People's Republic of China}
\email{luqikai@zjnu.edu.cn}

\address{Minbo Yang  \newline\indent School of Mathematical Sciences, Zhejiang Normal University, \newline\indent Jinhua, Zhejiang, 321004, People's Republic of China}
\email{mbyang@zjnu.edu.cn} 

\subjclass[2020]{35J40, 35A15, 35B44}

\keywords{Biharmonic operator, Br\'{e}zis-Nirenberg problem, energy asymptotic, blow-up.}

\thanks{* Corresponding author}

\begin{document}
	\maketitle
	
	\begin{abstract}
		For dimensions $n\geq8$, we are concerned with the quotient functional of the biharmonic Br\'{e}zis-Nirenberg problem under the Navier boundary condition
		$$
		S(\varepsilon V):=\inf_{0\not\equiv u\in H^2(\Omega)\cap H_0^1(\Omega)}\frac{\int_{\Omega}|\Delta u|^2dx+\varepsilon\int_{\Omega}V|u|^2dx}{\big(\int_{\Omega}|u|^{2^\star}dx\big)^{2/2^\star}},
		$$
	    where $2^\star=\frac{2n}{n-4}$ is the critical Sobolev exponent of the embedding $H^2(\Omega)\cap H_0^1(\Omega)\hookrightarrow L^{2^\star}(\Omega)$, $\Omega\subset\mathbb{R}^n$ is a bounded open set and $V:\overline{\Omega}\rightarrow\mathbb{R}$ is a continuous function. Under certain assumptions on $V$, we establish sharp asymptotics for the energy difference $S(0)-S(\varepsilon V)$, as $\varepsilon\rightarrow0^+$, by means of matching upper and lower bound estimates. Moreover, we give a precise description of the blow-up profile of (almost) minimizing sequences and characterize the blow-up rate and the location of concentration points.
	\end{abstract}
	
	\section{Introduction And Main Results}
	In the celebrated paper \cite{BN1983}, Br\'{e}zis and Nirenberg studied the following quotient functional with Dirichlet boundary condition
	\begin{equation*}
		\widetilde{S}_{a}[u]:=\frac{\int_{\Omega}|\nabla u|^2dx+\int_{\Omega}a|u|^2dx}{\big(\int_{\Omega}|u|^{2n/(n-2)}dx\big)^{(n-2)/n}},\hspace{4mm}n\geq3,
	\end{equation*}
	over all $0\not\equiv u\in H_0^1(\Omega)$, where $\Omega\subset\mathbb{R}^n$ is a bounded open set and $a$ is a continuous function on $\overline{\Omega}$. They considered the attainment of the corresponding variational problem
	\begin{equation}\label{minimizer}
		\widetilde{S}(a):=\inf_{0\not\equiv u\in H_0^1(\Omega)}\widetilde{S}_{a}[u]
	\end{equation}
	and proved that, if $n\geq4$ the following properties are equivalent:
	\begin{enumerate}[label=(\roman*)]
		\item $\exists x_0\in\Omega$, s.t. $a(x_0)<0$,
		\item $\widetilde{S}(a)<\mathcal{S}_1$, where $\mathcal{S}_1$ is the best constant in the Sobolev inequality in $\mathbb{R}^n$,
		\item  $\widetilde{S}(a)$ is achieved by some positive function $u_a\in H_0^1(\Omega)$.
	\end{enumerate}
	For the case $n=3$, also called the critical dimensional case, the situation becomes more delicate. One of the findings of \cite{BN1983} is that if $a$ is small (for instance, in $L^\infty(\Omega)$) but possibly nonzero, then $\widetilde{S}(a)=\mathcal{S}_1$. This phenomenon naturally raises the question of how to characterize the strict inequality $\widetilde{S}(a)<\mathcal{S}_1$. More precisely, Br\'{e}zis \cite{Brezis1986} conjectured and Druet \cite{Druet2002} proved that, if $n=3$, the following equivalence holds:
	\begin{enumerate}[label=(\roman*)]
		\item $\exists x_0\in\Omega$, s.t. $\phi_a(x_0)<0$, where $\phi_a$ denotes the Robin function associated with the coercive operator $-\Delta+a$ in $H_0^1(\Omega)$,
		\item $\widetilde{S}(a)<\mathcal{S}_1$,
		\item $\widetilde{S}(a)$ is achieved by some positive function $u_a\in H_0^1(\Omega)$.
	\end{enumerate}
	Alternative proof of equivalence (i)$\Leftrightarrow$(ii) can be found in \cite{Esposito2004}. It is worth pointing out that the condition $\phi_a$ should be negative somewhere in $\Omega$ plays the same role as the condition $a(x)$ should be negative somewhere in $\Omega$ does in the case $n\geq4$.
	
	 The Br\'{e}zis-Nirenberg problem \eqref{minimizer} is also one of the classical models to understand the blow-up phenomena of nonlinear partial differential equations. From the variational point of view, Frank, K\"onig, and Kova\v{r}\'ik \cite{FKK2020,FKK2021} investigated the energy asymptotics of the minimization problem
	\begin{equation}\label{second order case}
		\widetilde{S}(a+\varepsilon V)=\inf_{0\not\equiv u\in H_0^1(\Omega)}\frac{\int_{\Omega}|\nabla u|^2dx+\int_{\Omega}(a+\varepsilon V)|u|^2dx}{\big(\int_{\Omega}|u|^{2n/(n-2)}dx\big)^{(n-2)/n}},
	\end{equation}
	where $V\in C(\overline{\Omega})$ and $a$ is assumed to be critical in the sense of Hebey and Vaugon \cite{HV2001}. More precisely, a continuous function $a$ on $\overline{\Omega}$ is said to be \textit{critical} if $\widetilde{S}(a)=\mathcal{S}_1$ and if for any continuous function $\hat{a}$ on $\overline{\Omega}$ with $\hat{a}\leq a$ and $\hat{a}\not\equiv a$ one has $\widetilde{S}(\hat{a})<\widetilde{S}(a)$. When $n\geq4$, it is easy to see that the only critical potential is $a\equiv0$. In contrast, for $n=3$, critical functions of arbitrary shape exist (see \cite{Druet2002}). In \cite{FKK2020,FKK2021}, the authors provided a precise description of the blow-up profile of (almost) minimizing sequences and characterized the location of the concentration points in dimensions $n\geq4$ and in the critical dimension $n=3$, respectively. It is worth noting that these two regimes exhibit a fundamental distinction. As shown in \cite{FKK2020}, the asymptotic coefficient depends pointwise on the potential $V$, whereas in \cite{FKK2021} this dependence becomes non-local in dimension three. This finding is consistent with the Br\'{e}zis-Nirenberg phenomenon mentioned above. Also, we observe that analogous energy asymptotics in the case where $V$ is a negative constant and $n\geq4$ are obtained in \cite{Takahashi2004}. From the perspective of Euler–Lagrange equations involving one-bubble solutions, Br\'{e}zis and Peletier \cite{BP1989} considered the case where $\Omega$ is a ball and $a$ and $V$ are constants. For general open sets $\Omega$, they formulated three conjectures. The first two were resolved independently by Rey \cite{Rey1989} and Han \cite{Han1991}, who established the blow-up asymptotics of positive minimizers for problem \eqref{second order case} in dimensions $n\geq4$ with $a\equiv0$ and for a related problem involving nearly critical nonlinearity in dimensions $n\geq3$ with $a\equiv0$. Very recently, Frank, K\"onig, and Kova\v{r}\'ik \cite{FKK2024} proved the third conjecture which concerns certain nonzero $a$ in dimension three. Additionally, there has been considerable interest in the multibubble blow-up analysis. See for example, \cite{CLP2021,KL2022,KL2024,MP2002} and the references therein.
	
	Let $\Omega\subset\mathbb{R}^n$ be a bounded open set and $V:\overline{\Omega}\rightarrow\mathbb{R}$ be a continuous function. Inspired by Frank, K\"onig, and Kova\v{r}\'ik \cite{FKK2020,FKK2021}, we consider the following minimization problem
		\begin{equation}\label{wenti}
			S(\varepsilon V):=\inf_{0\not\equiv u\in H^2(\Omega)\cap H_0^1(\Omega)}S_{\varepsilon V}[u],
		\end{equation}
	where 
	\begin{equation}\label{mianbao}
		S_{\varepsilon V}[u]:=\frac{\int_{\Omega}|\Delta u|^2dx+\varepsilon\int_{\Omega}V|u|^2dx}{\big(\int_{\Omega}|u|^{2^\star}dx\big)^{2/2^\star}},\hspace{4mm}n\geq5,
	\end{equation}
	and $2^\star=\frac{2n}{n-4}$ is the critical Sobolev exponent of the embedding $\mathcal{H}:=H^2(\Omega)\cap H_0^1(\Omega)\hookrightarrow L^{2^\star}(\Omega)$. This number is to be compared with 
	\begin{equation}\label{best constant}
		\mathcal{S}_2=\pi^2n(n-4)(n^2-4)\Gamma(n/2)^\frac{4}{n}\Gamma(n)^{-\frac{4}{n}},
	\end{equation}
	the best constant \cite{EFJ1990,Lieb1983} in the Sobolev inequality
	\begin{equation}\label{Sobolev inequality}
		\mathcal{S}_2\bigg(\int_{\mathbb{R}^n}|u|^{2^\star}dx\bigg)^\frac{2}{2^\star}\leq\int_{\mathbb{R}^n}|\Delta u|^2dx,\hspace{6mm}u\in D^{2,2}(\mathbb{R}^n),
	\end{equation}
	where $D^{2,2}(\mathbb{R}^n)$ is the completion of $C_0^\infty(\mathbb{R}^n)$ under the norm
	\begin{equation*}
		\|u\|_{D^{2,2}(\mathbb{R}^n)}=\bigg(\int_{\mathbb{R}^n}|\Delta u|^2dx\bigg)^\frac{1}{2}.
	\end{equation*}
	Let $\mu_1$ denote the first eigenvalue of the biharmonic operator $\Delta^2$ on $\Omega$ under the Navier boundary condition, i.e. $u=\Delta u=0$ on $\partial\Omega$. The following result (assuming $V\equiv-1$), due to Van der Vorst \cite{V1995}, extends the seminal Br\'{e}zis-Nirenberg's result to the fourth order critical elliptic equation:
	\begin{enumerate}
		\item If $n\geq8$, then for every $\varepsilon\in(0,\mu_1)$ we have $S(-\varepsilon)<\mathcal{S}_2$.
		\item If $5\leq n\leq7$, then there exists a number $\mu_*>0$ such that for every $\varepsilon\in(\mu_*,\mu_1)$, we have $S(-\varepsilon)<\mathcal{S}_2$.
	\end{enumerate}			
	 If $\varepsilon\equiv0$, the situation becomes much more subtle and\cite{V1995} also showed that if $\Omega$ is star--shaped, the corresponding Euler--Lagrange equation has no positive solution. Ebobisse and Ahmedou \cite{EA2003} investigated the effect of domain's topology on the existence of solutions. They proved that the equation admits a positive solution provided that some homology group of $\Omega$ is nontrivial, which generalizes the famous result of Bahri--Coron \cite{BC1988} initially obtained for the second order elliptic problem. This topological condition is sufficient, but not necessary, because Gazzola, Grunau, and Squassina \cite{GGS2003} gave an example of contractible domains on which a positive solution still exists.
	
	The results mentioned above indicate that it is interesting to study the asymptotic behavior of the minimizers for the biharmonic problem \eqref{wenti} as $\varepsilon\rightarrow0^+$. In the special case where $V$ is a negative constant, El Mehdi and Selmi \cite{ElS2006} constructed positive blow-up solutions to the corresponding Euler-Lagrange equation, under the assumption that $u_\varepsilon$ is a minimizer of \eqref{wenti} in dimensions $n\geq9$, and showed that these solutions concentrate around critical points of the Robin function $H(x,x)$ (see \eqref{Robin function} below). Conversely, they proved that any nondegenerate critical point of $H(x,x)$ generates a family of positive minimizers. Such concentration phenomena extend to the fourth order equations some results obtained by Rey \cite{Rey1989,Rey1990} for the second order elliptic equations.  For other blow-up results involving the biharmonic operator, we refer the interested readers to \cite{CG2000,ElH2005,Geng1999,Geng2005,Wei1996} and the references therein.
	
	The aim of this paper is to determine the asymptotics of the perturbed minimal energy \eqref{wenti} and to understand the behavior of corresponding minimizers as $\varepsilon\rightarrow0^+$, without imposing any sign condition on the minimizers. Before stating our main results, we need to introduce some notations. Let us define on $\Omega$ the Robin function $R(x):=H(x,x)$ with
	\begin{equation}\label{Robin function}
		H(x,y)=|x-y|^{4-n}-G(x,y),\hspace{6mm}\text{for}\hspace{2mm}(x,y)\in\Omega\times\Omega,
	\end{equation}
	where $G$ is the Green's function of $\Delta^2$ under the Navier boundary condition, that is,
	\begin{equation*}
		\left\lbrace
		\begin{aligned}
			\Delta^2G(\cdot,y)=c_0\delta_y&\hspace{9mm}\text{in}\hspace{2mm}\Omega,\\
			G(\cdot,y)=\Delta G(\cdot,y)=0&\hspace{9mm}\text{on}\hspace{2mm}\partial\Omega,
		\end{aligned}
		\right.
	\end{equation*}
	where $c_0=(n-4)(n-2)|\mathbb{S}^{n-1}|$ and $\delta_y$ denotes the Dirac mass at $y$.
	
	For $\lambda>0$ and $x\in\mathbb{R}^n$, let
	\begin{equation}\label{A1.3}
		U_{x,\lambda}(y)=c_n\bigg(\frac{\lambda}{1+\lambda^2|y-x|^2}\bigg)^\frac{n-4}{2},\hspace{4mm}\text{with}\hspace{2mm}c_n=[(n-4)(n-2)n(n+2)]^\frac{n-4}{8}.
	\end{equation}
	It is well-known (see \cite{Lin1998,WX1999}) that $U_{x,\lambda}$ are the only solutions of
	\begin{equation*}
		\Delta^2u=u^\frac{n+4}{n-4},\hspace{4mm}u>0\hspace{2mm}\text{in}\hspace{2mm}\mathbb{R}^n,
	\end{equation*}
	with $u\in L^{2^\star}(\mathbb{R}^n)$ and $\Delta u\in L^2(\mathbb{R}^n)$, and are also the only minimizers of the Sobolev inequality \eqref{Sobolev inequality}. We denote by $PU_{x,\lambda}$ the projection of $U_{x,\lambda}$ on $\mathcal{H}$ satisfying
	\begin{equation*}
		\left\lbrace
		\begin{aligned}
			\Delta^2PU_{x,\lambda}=\Delta^2U_{x,\lambda}&\hspace{9mm}\text{in}\hspace{2mm}\Omega,\\
			PU_{x,\lambda}=\Delta PU_{x,\lambda}=0&\hspace{9mm}\text{on}\hspace{2mm}\partial\Omega.
		\end{aligned}
		\right.
	\end{equation*}
	The space $\mathcal{H}$ is equipped with the norm $\|\cdot\|$ and its corresponding inner product $(\cdot,\cdot)$ defined by
	\begin{equation*}
		\|u\|=\bigg(\int_{\Omega}|\Delta u|^2dx\bigg)^\frac{1}{2},\hspace{4mm}u\in\mathcal{H},
	\end{equation*}
	\begin{equation*}
		(u,v)=\int_{\Omega}\Delta u\Delta vdx\hspace{6mm}\text{for}\hspace{2mm}u,v\in \mathcal{H}.
	\end{equation*}
    We let $|\cdot|_{L^p(\Omega)}$ denote the usual $L^p$ norm in $\Omega$. Moreover, let
    \begin{equation*}
    	T_{x,\lambda}:=\spn\bigg\{PU_{x,\lambda},\partial_\lambda PU_{x,\lambda},\partial_{x_1}PU_{x,\lambda},\cdots,\partial_{x_n}PU_{x,\lambda}\bigg\}
    \end{equation*}
    and let $T_{x,\lambda}^\perp$ be the orthogonal complement of $T_{x,\lambda}$ in $\mathcal{H}$ with respect to the inner product. We introduce the quantity
    \begin{equation*}
    	\Phi_n:=
    	\left\lbrace
    	\begin{aligned}
    		\sup\limits_{x\in\mathcal{N}(V)}R(x)^{-1}|V(x)|&\hspace{9mm}\text{if}\hspace{2mm}n=8,\\
    		\sup\limits_{x\in\mathcal{N}(V)}R(x)^{-\frac{4}{n-8}}|V(x)|^{\frac{n-4}{n-8}}&\hspace{9mm}\text{if}\hspace{2mm}n\geq9,
    	\end{aligned}
    	\right.
    \end{equation*}
    where $\mathcal{N}(V):=\{x\in\Omega, V(x)<0\}$. Furthermore, set
    \begin{equation*}
    	\mathfrak{a}_n=\int_{\mathbb{R}^n}\frac{\mathrm{d}z}{(1+z^2)^{\frac{n+4}{2}}}
    	=\frac{2\omega_n}{n(n+2)},\hspace{6mm}n\geq8,
    \end{equation*}
    \begin{equation*}
    	\mathfrak {b}_n=\left\lbrace
    	\begin{aligned}
    		\omega_8&\hspace{9mm}\text{if}\hspace{2mm}n=8\\
    		\int_{\mathbb{R}^n}\frac{\mathrm{d}z}{(1+z^2)^{n-4}}&\hspace{9mm}\text{if}\hspace{2mm}n\geq9
    	\end{aligned}
    	\right.
    	=\left\lbrace\begin{aligned}
    		\omega_8&\hspace{9mm}\text{if}\hspace{2mm}n=8,\\
    		\omega_n\frac{\Gamma(\frac{n}{2}-4)\Gamma(\frac{n}{2})}{2\Gamma(n-4)}&\hspace{9mm}\text{if}\hspace{2mm}n\geq9,
    	\end{aligned}
    	\right.
    \end{equation*}
    \begin{equation*}
    	\mathfrak{C}_n=\frac{n-8}{n-4}\bigg(\frac{4}{n-4}\bigg)^{\frac{4}{n-8}}\mathfrak {a}_n^{-\frac{4}{n-8}}\mathfrak {b}_n^{\frac{n-4}{n-8}}c_n^{\frac{2n}{n-4}-\frac{8}{n-8}}\mathcal{S}_2^{\frac{4-n}{4}},\hspace{6mm}n\geq9,
    \end{equation*}
    
    \begin{equation*}
    	\mathfrak{D}_n=\bigg(\frac{4}{n-4}\bigg)^{\frac{n-4}{n-8}}\mathfrak {a}_n^{-\frac{4}{n-8}}\mathfrak {b}_n^{\frac{n-4}{n-8}}c_n^{\frac{2n}{n-4}-\frac{8}{n-8}}\mathcal{S}_2^{-\frac{n}{4}},\hspace{6mm}n\geq9,
    \end{equation*}
    where $\omega_n$ is the volume of the $n-1$ dimensional unit sphere in $\mathbb{R}^n$.
	
	Our first main result can be stated as follows.
	\begin{thm}\label{thm1}
		Assume $n\geq8$, $\Omega\subset\mathbb{R}^n$ is a bounded open set with $C^2$ boundary. Assume $V:\overline{\Omega}\rightarrow\mathbb{R}$ is a continuous function with $\mathcal{N}(V)\neq\emptyset$. As $\varepsilon\to0^+$, we have
		\begin{equation*}
			\mathcal{S}_2-S(\varepsilon V)=\left\lbrace
			\begin{aligned}
				e^{-\frac{c_8^2 }{10\Phi_8\varepsilon}(1+o(1))}\hspace{9mm}\text{if}\hspace{2mm}n=8,\\
				\mathfrak{C}_n\Phi_n\varepsilon^{\frac{n-4}{n-8}}+o(\varepsilon^{\frac{n-4}{n-8}})\hspace{9mm}\text{if}\hspace{2mm}n\geq 9,
			\end{aligned}
			\right.
		\end{equation*}
		where the constant $\mathcal{S}_2$ is defined in \eqref{best constant}.
	\end{thm}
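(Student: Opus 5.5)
The proof has two halves: an upper bound from a test-function computation, and a matching lower bound from blow-up analysis of almost minimizers, in the spirit of Frank--K\"onig--Kova\v{r}\'ik \cite{FKK2020}.

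\textbf{Upper bound.} Fix $x_0\in\mathcal{N}(V)$ and test $S_{\varepsilon V}$ with $PU_{x_0,\lambda}$. For the numerator I use $\Delta^2 PU=U^{\frac{n+4}{n-4}}$ together with the expansion $PU_{x,\lambda}=U_{x,\lambda}-c_n\lambda^{-\frac{n-4}{2}}H(x,\cdot)+O(\lambda^{-\frac{n}{2}})$. This gives
\[
\|PU_{x,\lambda}\|^2=c_n^{2^\star}\mathcal{S}_2^{n/4}c_n^{-2^\star}\cdots-c_n^{\frac{2n}{n-4}}\mathfrak{a}_n R(x)\lambda^{4-n}+o(\lambda^{4-n}),
\]
and an analogous expansion of $\int PU^{2^\star}$ with a term of the form $\frac{2n}{n-4}\cdot(\text{same})R\lambda^{4-n}$. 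The key input is the potential term: $\int V PU^2=c_n^2V(x_0)\mathfrak{b}_n\lambda^{-4}(1+o(1))$ for $n\geq9$, and $c_8^2V(x_0)\omega_8\lambda^{-4}\log\lambda$ for $n=8$. Combining these, the quotient equals
\[
\mathcal{S}_2-A R(x_0)\lambda^{4-n}+\varepsilon B V(x_0)\lambda^{-4}(\log\lambda)+\cdots,
\]
with $A>0$ (the boundary effect raises the quotient after normalization) and $B>0$; the factor $\log\lambda$ is present only when $n=8$. The sign should be checked carefully, since the gain comes from $V<0$ while $R>0$ costs. For $n\geq9$, optimizing over $\lambda\sim(\varepsilon|V|/R)^{-1/(n-8)}$ yields exactly $\mathfrak{C}_n R^{-4/(n-8)}|V|^{(n-4)/(n-8)}\varepsilon^{(n-4)/(n-8)}$; the constants $\mathfrak{a}_n,\mathfrak{b}_n$ enter through the elementary minimization $\min_t(\alpha t^{n-4}-\beta t^4)$. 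For $n=8$, the balance $R\lambda^{-4}\approx\varepsilon|V|\lambda^{-4}\log\lambda$ is not optimized at a scale; instead the gain is $\lambda^{-4}(\varepsilon c|V|\log\lambda-cR)$. Choosing $\log\lambda$ slightly larger than $R/(c\varepsilon|V|)$ gives $\mathcal{S}_2-S\geq e^{-\frac{c_8^2}{10\Phi_8\varepsilon}(1+o(1))}$. Finally, take the supremum over $x_0$.

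\textbf{Lower bound.} Let $u_\varepsilon$ be almost minimizers with $S_{\varepsilon V}[u_\varepsilon]\le S(\varepsilon V)+o(\text{gain})$, normalized in $L^{2^\star}$. Since $S(\varepsilon V)\to\mathcal{S}_2$, Struwe/Lions concentration compactness in $\mathcal{H}$ (Navier conditions, positivity of the Green function) shows that $u_\varepsilon$ is, up to sign, close to a single bubble $PU_{x_\varepsilon,\lambda_\varepsilon}$ with $\lambda_\varepsilon\to\infty$ and $\lambda_\varepsilon d(x_\varepsilon)\to\infty$, where $d$ is distance to the boundary. This rules out boundary concentration, using the blow-up of $R$ near $\partial\Omega$ given by the $C^2$ boundary. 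I then decompose $u_\varepsilon=\alpha(PU_{x,\lambda}+w)$ with $w\in T_{x,\lambda}^\perp$ by minimizing the distance, and use the nondegeneracy of the bubble. Together with the coercivity inequality
\[
\|w\|^2-(2^\star-1)\mathcal{S}_2\int U^{2^\star-2}w^2\geq c\|w\|^2
\]
on $T^\perp$, transferred to $PU$ with small errors, this gives the expansion
\[
S_{\varepsilon V}[u]\geq \mathcal{S}_2+A R(x)\lambda^{4-n}+\varepsilon BV(x)\lambda^{-4}(\log)+c\|w\|^2-C(\varepsilon\lambda^{-2}\|w\|+\lambda^{\frac{4-n}{2}}\|w\|\cdot\text{small})+o(\cdot).
\]
Absorbing the cross terms by Cauchy--Schwarz into $c\|w\|^2$, with errors $O(\varepsilon^2\lambda^{-4}\cdot\ldots)$ of lower order, reduces the lower bound to the same one-variable minimization. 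There $V(x)$ can be replaced by $V(x)$ with $x$ near $x_\varepsilon$ by continuity, and $x$ ranges over $\Omega$, so the bound is $\sup$ over $\mathcal{N}(V)$. If $V(x)\ge0$ near the concentration point there is no gain, contradicting the upper bound.

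\textbf{Main obstacle.} The hardest step is controlling the $w$-interaction terms sharply, in particular $\varepsilon\int V PU\,w$ and the boundary term $\int(U-PU)U^{2^\star-2}w$. For the critical case $n=8$, the logarithmic sizes $\varepsilon\lambda^{-4}\log\lambda$ require the estimate $|\varepsilon\int VPUw|\lesssim\varepsilon\lambda^{-2}(\log\lambda)^{1/2}\|w\|$, which must be checked to be negligible. I would first try the weighted estimate $|PU|_{L^{2n/(n+4)}}$ via H\"older and the Sobolev embedding for $w$. Finally, the $o(1)$ in the exponent for $n=8$ absorbs the non-sharp constants in these error terms.
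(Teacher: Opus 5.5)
Your plan is essentially the paper's proof: the upper bound comes from testing $PU_{x,\lambda}$ at a point of $\mathcal{N}(V)$ and optimizing in $\lambda$, and the lower bound comes from the decomposition $u_\varepsilon=\alpha(PU_{x,\lambda}+v)$ with $v\in T_{x,\lambda}^\perp$, K\"onig's coercivity on $T_{x,\lambda}^\perp$, Young absorption of $\varepsilon\int_\Omega VPU_{x,\lambda}v$, and the same one-variable minimization. As in the paper's Lemma 3.5, it is the bound $R(x)\gtrsim d(x)^{4-n}$ combined with the upper bound, not $\lambda d\to\infty$ by itself, that excludes concentration at $\partial\Omega$ or where $V\geq0$; and the sign in your upper-bound display must be corrected to $\mathcal{S}_2+\mathcal{S}_2^{1-\frac{n}{4}}\big(\mathfrak{a}_nc_n^{2^\star}R(x)\lambda^{4-n}+\varepsilon\mathfrak{b}_nc_n^2V(x)\lambda^{-4}\big)$ (with $\log\lambda$ multiplying the $V$-term when $n=8$), which is the sign your lower-bound expansion already uses.
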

	
	Our second main result shows that the blow-up profile of an arbitrary almost minimizing sequence $\{u_\varepsilon\}$ is given to leading order by the family of functions $PU_{x,\lambda}$. Moreover, we provide a precise characterization of both the blow-up rate and the concentration point.
	
	\begin{thm}\label{thm2}
		Assume $n\geq8$, $\Omega\subset\mathbb{R}^n$ is a bounded open set with $C^2$ boundary. Assume $V:\overline{\Omega}\rightarrow\mathbb{R}$ is a continuous function with $\mathcal{N}(V)\neq\emptyset$. Let $\{u_\varepsilon\}\subset \mathcal{H}$ be a family of functions satisfying 
		\begin{equation}\label{A1.1}
			\lim_{\varepsilon\to 0}\frac{S_{\varepsilon V}[u_\varepsilon]-S(\varepsilon V)}{\mathcal{S}_2-S(\varepsilon V)}=0\quad\text{and}\quad\int_{\Omega}|u_\varepsilon|^{2^\star}\,\mathrm{d}x=\mathcal{S}_2^{\frac{n}{4}}.
		\end{equation}
		Then there exist $\{x_\varepsilon\}\subset\Omega$, $\{\lambda_\varepsilon\}\subset(0,\infty)$, $\{\alpha_\varepsilon\}\subset\mathbb{R}$ and $\{v_\varepsilon\}\subset\mathcal{H}$ with $v_\varepsilon\in T_{x_\varepsilon,\lambda_\varepsilon}^\perp$ such that
		$$u_\varepsilon=\alpha_\varepsilon(PU_{x_\varepsilon,\lambda_\varepsilon}+v_\varepsilon).$$
		Up to a subsequence, we have $x_\varepsilon\to x_0$ for some $x_0\in\mathcal{N}(V)$. Moreover,
		\begin{equation}\label{further locate}
			\left\lbrace
			\begin{aligned}
				R(x_0)^{-1}|V(x_0)|=\Phi_8&\hspace{9mm}\text{if}\hspace{2mm}n=8,\\
				R(x_0)^{-\frac{4}{n-8}}|V(x_0)|^{\frac{n-4}{n-8}}=\Phi_n&\hspace{9mm}\text{if}\hspace{2mm}n\geq9,
			\end{aligned}
			\right.
		\end{equation}
		
		\begin{equation*}
			\left\lbrace
			\begin{aligned}
				\|v_\varepsilon\|\leq e^{-\frac{c_8^2}{20\Phi_8\varepsilon}(1+o(1))}&\hspace{9mm}\text{if}\hspace{2mm}n=8,\\
				\|v_\varepsilon\|=o(\varepsilon^{\frac{n-4}{2n-16}})&\hspace{9mm}\text{if}\hspace{2mm}n\geq9,
			\end{aligned}
			\right.
		\end{equation*}
		
		\begin{equation*}
			\left\lbrace
			\begin{aligned}
				\lim_{\varepsilon\rightarrow0}\varepsilon\log\lambda_\varepsilon=\frac{c_8^2R(x_0)}{40|V(x_0)|}&\hspace{9mm}\text{if}\hspace{2mm}n=8,\\
				\lim_{\varepsilon\rightarrow0}\varepsilon\lambda_\varepsilon^{n-8}=\frac{(n-4)\mathfrak {a}_nc_n^{2^\star-2}R(x_0)}{4\mathfrak {b}_n|V(x_0)|}&\hspace{9mm}\text{if}\hspace{2mm}n\geq9,
			\end{aligned}
			\right.
		\end{equation*}
		
		\begin{equation*}
			\left\lbrace
			\begin{aligned}
				|\alpha_\varepsilon|=1+e^{-\frac{c_8^2}{10\Phi_8\varepsilon}(1+o(1))}&\hspace{9mm}\text{if}\hspace{2mm}n=8,\\
				|\alpha_\varepsilon|=1+\mathfrak{D}_n\Phi_n\varepsilon^{\frac{n-4}{n-8}}+o(\varepsilon^{\frac{n-4}{n-8}})&\hspace{9mm}\text{if}\hspace{2mm}n\geq9.
			\end{aligned}
			\right.
		\end{equation*}
	\end{thm}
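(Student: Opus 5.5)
The plan is to follow the Frank--K\"onig--Kova\v{r}\'ik strategy and prove Theorems~\ref{thm1} and \ref{thm2} together, from matching upper and lower bounds. The upper bound comes from test functions; the lower bound comes from an expansion of an almost minimizing sequence around its nearest bubble.

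\emph{Upper bound.} I would take $u=PU_{x,\lambda}$ with $x\in\mathcal{N}(V)$ fixed and $\lambda\to\infty$. Using the standard expansions for the Navier projection, namely $PU_{x,\lambda}=U_{x,\lambda}-c_n\lambda^{-\frac{n-4}{2}}H(x,\cdot)+o(\lambda^{-\frac{n-4}{2}})$ in $L^\infty$, and $\lambda\,\dist(x,\partial\Omega)\to\infty$, I expect
\begin{equation*}
S_{\varepsilon V}[PU_{x,\lambda}]=\mathcal{S}_2+c\,R(x)\lambda^{4-n}+\varepsilon V(x)\,\kappa(\lambda)+o(\lambda^{4-n})+o(\varepsilon\kappa(\lambda)).
\end{equation*}
Here $\kappa(\lambda)\sim \mathfrak b_n c_n^2\lambda^{-4}$ for $n\ge9$, since $\int U^2$ converges. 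For $n=8$ one has $\int_\Omega U_{x,\lambda}^2\sim \omega_8c_8^2\lambda^{-4}\log\lambda$, which is the source of the logarithm. The constant $c$ comes from the cross term $\int U^{2^\star-1}H(x,\cdot)$, which yields $\mathfrak a_n c_n^{2^\star}$ after normalization. Optimizing in $\lambda$ gives:
\begin{itemize}
\item for $n\ge9$, $\lambda^{n-8}\sim \frac{(n-4)\mathfrak a_n c_n^{2^\star-2}R(x)}{4\mathfrak b_n|V(x)|}\varepsilon^{-1}$;
\item for $n=8$, $\log\lambda\sim c_8^2R/(40|V|\varepsilon)$.
\end{itemize}
Substituting back and taking the supremum over $x\in\mathcal N(V)$ produces $\Phi_n$ and the constants $\mathfrak C_n$ and $e^{-c_8^2/(10\Phi_8\varepsilon)}$. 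This gives $\mathcal S_2-S(\varepsilon V)\ge$ the claimed quantity, and in particular $S(\varepsilon V)<\mathcal S_2$.

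\emph{Lower bound and decomposition.} Let $u_\varepsilon$ satisfy \eqref{A1.1}. Since $S_{\varepsilon V}[u_\varepsilon]\to\mathcal S_2$ and $\varepsilon\to0$, $u_\varepsilon$ is a minimizing sequence for the unperturbed Sobolev quotient on $\Omega$. Because $\Omega$ is bounded, the quotient is not attained there, so by concentration compactness (Struwe/Lions type profile decomposition for $\Delta^2$) $u_\varepsilon$ concentrates at a single point. Then $\inf_{x,\lambda,\alpha}\|u_\varepsilon-\alpha PU_{x,\lambda}\|\to0$ with $\lambda\,\dist(x,\partial\Omega)\to\infty$. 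Choosing the minimizer of this distance gives the orthogonal decomposition $u_\varepsilon=\alpha_\varepsilon(PU_{x_\varepsilon,\lambda_\varepsilon}+v_\varepsilon)$ with $v_\varepsilon\in T^\perp_{x_\varepsilon,\lambda_\varepsilon}$; this is the standard Bahri--Coron argument adapted to $\mathcal H$. I then expand the quotient to second order in $v$:
\begin{equation*}
S_{\varepsilon V}[u_\varepsilon]\ge \mathcal S_2+cR(x_\varepsilon)\lambda_\varepsilon^{4-n}+\varepsilon V(x_\varepsilon)\kappa(\lambda_\varepsilon)+c'\|v_\varepsilon\|^2-C\varepsilon\|v_\varepsilon\|\,\|PU\|_{L^2}-\text{h.o.t.}
\end{equation*}
Here the coercivity $\|v\|^2-(2^\star-1)\int U^{2^\star-2}v^2\ge c'\|v\|^2$ on $T^\perp$ comes from the nondegeneracy of the bubble (Lu--Wei type). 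The linear term $\int U^{2^\star-1}v$ is controlled by the boundary correction, since it vanishes for $U$ by orthogonality.

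\emph{Conclusion and main obstacle.} Comparing this lower bound with the upper bound forces the following:
\begin{itemize}
\item $R(x_\varepsilon)\lambda_\varepsilon^{4-n}$ and $\varepsilon\kappa(\lambda_\varepsilon)$ are of the optimal order;
\item $x_\varepsilon$ stays away from $\partial\Omega$, since $R\to\infty$ at the boundary, and therefore converges to some $x_0$ with $V(x_0)<0$;
\item the one-variable optimization must be nearly saturated, which yields \eqref{further locate} and the limits for $\lambda_\varepsilon$;
\item absorbing the error terms gives the bound on $\|v_\varepsilon\|$;
\item the normalization $\int|u_\varepsilon|^{2^\star}=\mathcal S_2^{n/4}$ gives $|\alpha_\varepsilon|$.
\end{itemize}
The main obstacle is controlling the cross and remainder terms, such as $\varepsilon\int V\,PU\,v$, $\int U^{2^\star-1}v$, and the quartic $v$-terms, sharply enough not to spoil the leading order. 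This is delicate for $n=8$, where the gain is only logarithmic: there the $L^2$ mass of $U$ concentrates on all scales, and $V(x_\varepsilon)$ must be replaced by a local average with an $o(1)$ error. I would first handle $\int V PU v$ by splitting $\Omega$ into $\{|y-x_\varepsilon|<\delta\}$ and its complement, then use continuity of $V$ and Cauchy--Schwarz with $\|v\|_{L^2}\lesssim\|v\|$.
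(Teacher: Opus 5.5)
Your proposal is correct and follows essentially the same route as the paper. It uses $PU_{x,\lambda}$ test functions for the upper bound, then a Lions/Bahri--Coron decomposition $u_\varepsilon=\alpha_\varepsilon(PU_{x_\varepsilon,\lambda_\varepsilon}+v_\varepsilon)$ with $v_\varepsilon\in T^\perp_{x_\varepsilon,\lambda_\varepsilon}$, and a second-order expansion in which orthogonality kills the linear term exactly ($\int_\Omega U_{x,\lambda}^{2^\star-1}v=(PU_{x,\lambda},v)=0$) while coercivity on $T^\perp_{x,\lambda}$ controls the quadratic part. It closes with a comparison that excludes boundary concentration and forces near-saturation of the one-variable optimization in $\lambda$; note that the exclusion step needs the quantitative bound $R(x)\gtrsim d(x)^{4-n}$ rather than just $R\to\infty$, because all errors are only $o((\lambda d)^{4-n})$. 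Also, the term $\varepsilon\int_\Omega V\,PU_{x,\lambda}v$ that you single out as the main obstacle is handled by a single Young inequality, since $\varepsilon^2\int_\Omega|V|PU_{x,\lambda}^2=o(\varepsilon\kappa(\lambda))$, so no splitting of $\Omega$ is needed.
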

	
    We now briefly comment on our main theorems. The energy asymptotics established in Theorem \ref{thm1} appear to be new for the fourth order Br\'{e}zis-Nirenberg problem under the Navier boundary condition. We emphasize that Theorem \ref{thm2} not only extends the results of \cite{ElS2006} to dimension $n=8$, but also provides a more precise description of the blow-up points as being extrema of a quotient involving the Robin's function (see \eqref{further locate}). Furthermore, we treat directly with the variational problem \eqref{wenti} and not with the Euler-Lagrange equation. Therefore, our blow-up results hold not only for minimizers but even for almost minimizers in the sense of \eqref{A1.1}. On the other hand, compared to \cite{ElS2006}, a limitation of our approach is that blow-up estimates are established only in the $H^2$-norm rather than the $L^\infty$-norm. Finally, we remark that our analysis is restricted to linear perturbations, namely the exponent of the perturbation term in \eqref{mianbao} is fixed to be two, rather than considering general subcritical perturbations. The reason is that the proof of Proposition \ref{homogeneity} relies crucially on the homogeneity of the associated quotient functional.
    
    The assumption $n\geq8$ is sharp, since for $5\leq n\leq7$, minimization problem \eqref{wenti} has no minimizer when $\Omega$ is a ball for $\varepsilon$ small \cite[Theorem 3]{GGS2003}. The energy asymptotics and concentration results are much more difficult to obtain in these low dimensions. In sharp contrast to the second order critical elliptic equations, very little is known about the fourth order case in the critical dimensions $5\leq n\leq7$. To the best of our knowledge, there has no characterization of criticality of the function $a$ in low dimensions for a general domain $\Omega$. We notice, however, that the complete characterization of critical functions $a$ in low dimensions has recently been established in the fractional setting \cite{DeK2023} and in the quasilinear framework \cite{AE2024}.
    
    Throughout the paper we denote by $c,c_0,C,C_1,C_2,\cdots$ various positive constants which may vary from line to line and are not essential to the problem. We shall write that $a\lesssim b$ (resp. $a\gtrsim b$) if $a\leq Cb$ (resp. $Ca\geq b$). An expression $A(\varepsilon)$ depending on $\varepsilon>0$ will said to be $O(\varepsilon)$ (resp. $o(\varepsilon)$) if, all other parameters being fixed, $|A(\varepsilon)/\varepsilon|\leq C$ (resp. $A(\varepsilon)/\varepsilon\rightarrow0$).
    
    The paper is organized as follows. In Section 2 we establish the upper bound estimate by employing the family of functions $PU_{x,\lambda}$ as test functions. Section 3 is devoted to the lower bound estimate, where we derive an asymptotic decomposition for a general almost minimizing sequence $\{u_\varepsilon\}$ as well as the corresponding expansion of $S_{\varepsilon V}[u_\varepsilon]$. Then in Section 4 we show the proof of Theorems \ref{thm1} and \ref{thm2}. Finally, an appendix contains two auxiliary technical results.

\section{Upper Bound Estimate}
In this section, we concentrate on the upper bound estimate for the minimal energy $S(\varepsilon V)$. To this end, we employ the family of test functions $PU_{x,\lambda}$, where the parameters $x\in\Omega$ and $\lambda>0$ are chosen suitably, and derive a precise expansion of the value $S_{\varepsilon V}[PU_{x,\lambda}]$. Let $$d(x)=\dist(x,\partial\Omega)$$
 denote the distance from a point $x\in\Omega$ to the boundary $\partial\Omega$.
\begin{thm}\label{thm2.1}
	Let $x=x_\lambda$ be a sequence of points such that $d(x)\lambda\to+\infty$. Then as $\lambda\to+\infty$, we have 
	\begin{equation}\label{A1.10}
		\int_{\Omega}|\Delta PU_{x,\lambda}|^2\,\mathrm{d}y=\mathcal{S}_2^{\frac{n}{4}}-\mathfrak {a}_nc_n^{2^\star}R(x)\lambda^{4-n}+O\big((d(x)\lambda)^{\frac{16}{5}-n}\big),
	\end{equation}
	\begin{equation}\label{A1.10+}
		\int_{\Omega}V|PU_{x,\lambda}|^2\,\mathrm{d}y=
		\left\lbrace
		\begin{aligned}
			\mathfrak {b}_8c_8^2V(x)\lambda^{-4}\log\lambda+O\big((d(x)\lambda)^{-4}\big)+o(\lambda^{-4}\log\lambda)&\hspace{9mm}\text{if}\hspace{2mm}n=8,\\
			\mathfrak {b}_nc_n^2V(x)\lambda^{-4}+O\big((d(x)\lambda\big)^{4-n})+o(\lambda^{-4})&\hspace{9mm}\text{if}\hspace{2mm}n\geq9,
		\end{aligned}
		\right.
	\end{equation}
	and
	\begin{equation}\label{A1.10++}
		\int_{\Omega}| PU_{x,\lambda}|^{2^\star}\,\mathrm{d}y=\mathcal{S}_2^{\frac{n}{4}}-2^\star\mathfrak {a}_n c_n^{2^\star}R(x)\lambda^{4-n}+o\big((d(x)\lambda\big)^{4-n}).
	\end{equation}
	In particular, we have
	\begin{equation}\label{A1.10+++}
		S_{\varepsilon V}[PU_{x,\lambda}]=
		\left\lbrace
		\begin{aligned}
			\mathcal{S}_2+\frac{\mathfrak {a}_8c_8^{4}R(x)+\varepsilon \mathfrak {b}_8c_8^2V(x)\log\lambda}{\mathcal{S}_2\lambda^{4}}+o\big((d(x)\lambda)^{-4}\big)+o(\varepsilon\lambda^{-4}\log\lambda)&\hspace{9mm}\text{if}\hspace{2mm}n=8,\\
			\mathcal{S}_2+\mathcal{S}_2^{1-\frac{n}{4}}\bigg(\frac{\mathfrak {a}_nc_n^{2^\star}R(x)}{{\lambda^{n-4}}}+\frac{\varepsilon \mathfrak {b}_nc_n^2V(x)}{\lambda^{4}}\bigg)+o\big((d(x)\lambda\big)^{4-n})+o(\varepsilon\lambda^{-4})&\hspace{9mm}\text{if}\hspace{2mm}n\geq9,
		\end{aligned}
		\right.
	\end{equation}
	as $\lambda\to+\infty$.
\end{thm}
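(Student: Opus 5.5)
The proof will rest on the standard decomposition $PU_{x,\lambda}=U_{x,\lambda}-\varphi_{x,\lambda}$. Here $\varphi_{x,\lambda}$ is biharmonic in $\Omega$ and satisfies $\varphi_{x,\lambda}=U_{x,\lambda}$, $\Delta\varphi_{x,\lambda}=\Delta U_{x,\lambda}$ on $\partial\Omega$. The first step is to compare $\varphi_{x,\lambda}$ with the regular part of the Green's function. Far from $x$,
$$U_{x,\lambda}(y)=c_n\lambda^{-\frac{n-4}{2}}|y-x|^{4-n}+O(\lambda^{-\frac{n}{2}}|y-x|^{2-n}),$$
with the analogous expansion for $\Delta U_{x,\lambda}$. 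The maximum principle, applied twice (first to $\Delta$ of the difference, then to the difference itself, both having harmonic-type boundary data), then gives
$$\varphi_{x,\lambda}=c_n\lambda^{-\frac{n-4}{2}}H(x,\cdot)+f_{x,\lambda},$$
with $|f_{x,\lambda}|_{L^\infty}\lesssim\lambda^{-\frac{n}{2}}d(x)^{2-n}$. We will also use $0\le\varphi_{x,\lambda}\le U_{x,\lambda}$ and $H(x,y)\lesssim d(x)^{4-n}$, and these should be recorded at the same time. This mirrors the second-order argument of Rey and Bahri, now carried out for the Navier problem.

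For \eqref{A1.10}, we integrate by parts using $\Delta^2PU=\Delta^2U=U^{\frac{n+4}{n-4}}$ and the Navier boundary conditions:
$$\int_\Omega|\Delta PU|^2=\int_\Omega U^{\frac{n+4}{n-4}}PU=\int_{\mathbb R^n}U^{2^\star}-\int_{\Omega^c}U^{2^\star}-\int_\Omega U^{\frac{n+4}{n-4}}\varphi.$$
The first term equals $\mathcal S_2^{n/4}$, and the second is $O((d\lambda)^{-n})$. For the third, we insert the expansion of $\varphi$ and Taylor expand $H(x,y)$ around $y=x$ on a ball of radius $\rho\sim d(x)$, using $|\nabla_yH|\lesssim d^{3-n}$. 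This gives
$$c_n^{2^\star}\lambda^{4-n}\mathfrak a_nR(x)+O\big((d\lambda)^{4-n}\cdot\text{small}\big).$$
The awkward exponent $\tfrac{16}{5}-n$ should come from optimizing the cutoff radius $\rho=d^{a}\lambda^{-b}$ between the Taylor error and the tail $\int_{|y-x|>\rho}U^{\frac{n+4}{n-4}}$. We will choose $\rho$ to balance these two errors and accept whatever exponent results.

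For \eqref{A1.10++}, we expand $|U-\varphi|^{2^\star}=U^{2^\star}-2^\star U^{2^\star-1}\varphi+O(U^{2^\star-2}\varphi^2+\varphi^{2^\star})$. The linear term has already been computed. The quadratic remainders are bounded using $\varphi\lesssim\lambda^{-\frac{n-4}{2}}d^{4-n}$; for instance, $\int U^{2^\star-2}\varphi^2\lesssim(d\lambda)^{2(4-n)}\log$ or better, which is $o((d\lambda)^{4-n})$.

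For \eqref{A1.10+}, we write $\int V PU^2=\int VU^2+O(\int U\varphi)$. The error term satisfies $\int_\Omega U\varphi\lesssim\lambda^{-\frac{n-4}{2}}d^{4-n}\int_\Omega U\lesssim\lambda^{4-n}d^{4-n}$, which fits inside $O((d\lambda)^{4-n})$ since $d$ is bounded. For $n\ge9$, $U^2$ is integrable on $\mathbb R^n$ after scaling, with $\int_\Omega U^2=\lambda^{-4}c_n^2\int\frac{dz}{(1+z^2)^{n-4}}+\dots$, and continuity of $V$ at $x$ gives $\mathfrak b_nc_n^2V(x)\lambda^{-4}+o(\lambda^{-4})$. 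For $n=8$, the integrand $(1+z^2)^{-4}$ decays like $|z|^{-8}$, so $\int_{|z|<\lambda R}$ contributes $\omega_8\log\lambda$. Continuity of $V$ again gives the main term, with the region where $|y-x|\sim d(x)$ producing the $O((d\lambda)^{-4})$ error. Because $V$ is only continuous, we will split at a small fixed radius $\delta$ and let $\delta\to0$ after $\lambda\to\infty$.

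Finally, \eqref{A1.10+++} follows by combining the three expansions:
$$S=\frac{\mathcal S_2^{n/4}-A+\varepsilon B}{(\mathcal S_2^{n/4}-2^\star A)^{2/2^\star}}, \qquad (\mathcal S_2^{n/4}-2^\star A)^{-\frac{n-4}{2n}}\approx\mathcal S_2^{-\frac{n-4}{4}}\big(1+\tfrac{n-4}{n}\cdot\tfrac{n}{2}\cdot\ldots\big),$$
where $A=\mathfrak a_nc_n^{2^\star}R\lambda^{4-n}$. The net coefficient of $A$ is $-1+2^\star\cdot\frac{2}{2^\star}=+1$, which gives $+A\,\mathcal S_2^{1-n/4}$. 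For $n=8$ this reads $\mathcal S_2^{-1}$, since $2^\star=4$. The main obstacle is the first step, namely the sharp pointwise control of $\varphi_{x,\lambda}-c_n\lambda^{-\frac{n-4}{2}}H(x,\cdot)$ for the Navier problem, including the two-step maximum principle, together with the bookkeeping of error exponents in \eqref{A1.10}.
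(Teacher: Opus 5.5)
Your outline is the paper's own route. The paper quotes the decomposition $U_{x,\lambda}=PU_{x,\lambda}+\theta_{x,\lambda}$, $\theta_{x,\lambda}=c_n\lambda^{\frac{4-n}{2}}H(x,\cdot)+f_{x,\lambda}$, with $f_{x,\lambda}=O(\lambda^{-\frac n2}d^{2-n})$ and $0\le\theta_{x,\lambda}\le U_{x,\lambda}$, from \cite{AH2006}. Balancing the Taylor error $\lambda^{4-n}\rho\,d^{3-n}$ against the tail $\lambda^{4-n}d^{4-n}(\lambda\rho)^{-4}$ gives exactly the paper's choice $\rho=d^{1/5}\lambda^{-4/5}$. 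However, one estimate fails in the generality of the statement. The hypothesis only gives $\lambda d\to\infty$, and the expansion is used precisely while $d\to0$ has not yet been excluded (Lemma~\ref{lem3.5} relies on it to show that $d$ stays bounded below).

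Your control of the quadratic remainder in \eqref{A1.10++} uses the global bound $\varphi\lesssim\lambda^{\frac{4-n}{2}}d^{4-n}$ on all of $\Omega$. For $n\ge9$ this gives $\lambda^{4-n}d^{8-2n}\int_\Omega U^{2^\star-2}\sim\lambda^{-n}d^{8-2n}$, because $\int_\Omega U^{2^\star-2}\sim\lambda^{-4}$ when $n\ge9$. This is not $(d\lambda)^{2(4-n)}\log$. Relative to $(\lambda d)^{4-n}$ it is $\lambda^{-4}d^{4-n}$, which does not tend to zero, e.g.\ for $n=12$, $d=\lambda^{-1/2}$. For $n=8$ you get $(\lambda d)^{-8}\log\lambda$, which is $o((\lambda d)^{-4})$ only if $(\lambda d)^4\gg\log\lambda$. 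The sup-norm bound for $\int\varphi^{2^\star}$ has the same defect.

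The fix is to localize as the paper does. On $B_{d/2}(x)$, use the sup bound together with $\int_{B_{d/2}(x)}U^{2^\star-2}\lesssim\lambda^{-4}d^{n-8}$ for $n\ge9$ (resp.\ $\lesssim\lambda^{-4}\log(\lambda d)$ for $n=8$). This gives $(\lambda d)^{-n}$ (resp.\ $(\lambda d)^{-8}\log(\lambda d)$). Off $B_{d/2}(x)$, $0\le\varphi\le U$ gives $\varphi^{2^\star}\le U^{2^\star-2}\varphi^2\le U^{2^\star}$, whose integral over $\{|y-x|>d/2\}$ is $O((\lambda d)^{-n})$. The paper instead uses H\"older with $|\theta_{x,\lambda}|_{L^{2^\star}(\Omega)}=O((\lambda d)^{\frac{4-n}{2}})$.

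The same small-$d$ issue affects the step you single out as the main obstacle, which the paper does not prove but cites. Set $w:=\varphi-c_n\lambda^{\frac{4-n}{2}}H(x,\cdot)$. Suppose you bound the harmonic function $\Delta w$ by its boundary supremum $\lesssim\lambda^{-\frac n2}d^{-n}$, and then $w$ by its boundary supremum plus $C\sup|\Delta w|$. This only gives $|f_{x,\lambda}|\lesssim\lambda^{-\frac n2}d^{-n}$. The $f$-contribution to \eqref{A1.10} is then $(\lambda d)^{4-n}(\lambda d^2)^{-2}$, which is $o((\lambda d)^{4-n})$ only if $\lambda d^2\to\infty$.

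To reach $\lambda^{-\frac n2}d^{2-n}$, use $|y-x|^{-2}\le d^{-2}$ on $\partial\Omega$. First compare $\pm\Delta w$ with the superharmonic function $C\lambda^{-\frac n2}d^{-2}|y-x|^{2-n}$. Then compare $\pm w$ with $C\lambda^{-\frac n2}d^{-2}|y-x|^{4-n}$, noting that $-\Delta|y-x|^{4-n}=2(n-4)|y-x|^{2-n}$. This gives the bound for $|y-x|\ge d$, and the maximum principle on $B_d(x)$ finishes the argument.

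Finally, in your quotient expansion the denominator exponent should be $-2/2^\star=-\frac{n-4}{n}$, not $-\frac{n-4}{2n}$. Your net coefficient $+1$ for $A$ is nonetheless correct.
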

\begin{proof}
	We prove the estimates \eqref{A1.10}–\eqref{A1.10++} and expansion \eqref{A1.10+++} follows immediately by applying the Taylor expansion to the quotient functional $S_{\varepsilon V}[PU_{x,\lambda}]$. A straightforward integration by parts shows that
	\begin{equation}\label{A1.12}
		\int_{\Omega}|\Delta PU_{x,\lambda}|^2\,\mathrm{d}y=\int_{\Omega}U_{x,\lambda}^{\frac{n+4}{n-4}}PU_{x,\lambda}\,\mathrm{d}y.
	\end{equation} 
	Moreover, Proposition 2.1 in \cite{AH2006} provides some necessary estimates
	\begin{equation}\label{A1.13}
		U_{x,\lambda}=PU_{x,\lambda}+\theta_{x,\lambda},\quad\theta_{x,\lambda}=c_n\lambda^{\frac{4-n}{2}}H(x,y)+f_{x,\lambda},
	\end{equation}
	where $f_{x,\lambda}$ satisfies
	\begin{equation}\label{A1.14}
		f_{x,\lambda}=O\bigg(\frac{1}{\lambda^{\frac{n}{2}}d^{n-2}}\bigg).
	\end{equation}
	Combining \eqref{A1.12} and \eqref{A1.13}, it follows that
	\begin{equation}\label{A1.15}
		\int_{\Omega}|\Delta PU_{x,\lambda}|^2\,\mathrm{d}y=\int_{\Omega}U_{x,\lambda}^{2^\star}\,\mathrm{d}y-c_n\lambda^{\frac{4-n}{2}}\int_{\Omega}U_{x,\lambda}^{\frac{n+4}{n-4}}H(x,y)\,\mathrm{d}y-\int_{\Omega}U_{x,\lambda}^{\frac{n+4}{n-4}}f_{x,\lambda}\,\mathrm{d}y.
	\end{equation}
	By virtue of the definition of $\mathcal{S}_2$, together with \eqref{A1.3}, we derive
	\begin{equation}\label{A1.16}
		\int_{\Omega}U_{x,\lambda}^{2^\star}\,\mathrm{d}y=\int_{\mathbb{R}^n}U_{x,\lambda}^{2^\star}\,\mathrm{d}y+O\big((d(x)\lambda\big)^{-n})=\mathcal{S}_2^{\frac{n}{4}}+O\big((d(x)\lambda)^{-n}\big).
	\end{equation}
	Moreover, for all $x\in\Omega$,
	\begin{equation}\label{A1.17}
		d(x)^{4-n}\lesssim H(x,y)\lesssim d(x)^{4-n}\hspace{4mm}\text{and}\hspace{4mm}|\nabla_y H(x,y)|\lesssim d(x)^{3-n},
	\end{equation}
	where the first estimate follows from an iterated application of the maximum principle. Invoking \eqref{A1.3} and \eqref{A1.17}, we get
	\begin{equation*}
		\begin{aligned}
			\int_{B_{\rho}(x)}U_{x,\lambda}^{\frac{n+4}{n-4}}H(x,y)\,\mathrm{d}y
			=&c_n^{\frac{n+4}{n-4}}\lambda^{\frac{n+4}{2}}(R(x)+O(\rho d(x)^{3-n}))\int_{B_{\rho}(x)}\frac{\mathrm{d}y}{(1+\lambda^2|x-y|^2)^{\frac{n+4}{2}}}\\
			=&\mathfrak {a}_nc_n^{\frac{n+4}{n-4}}\lambda^{\frac{4-n}{2}}\bigg(R(x)+O\big(\rho d(x)^{3-n}\big)\bigg)\big(1+O((\lambda \rho)^{-4})\big)
		\end{aligned}
	\end{equation*}
	and
	\begin{equation*}
		\begin{aligned}
			\int_{\Omega\setminus B_{\rho}(x)}U_{x,\lambda}^{\frac{n+4}{n-4}}H(x,y)\,\mathrm{d}y
			=&c_n^{\frac{n+4}{n-4}}\lambda^{\frac{n+4}{2}}O\big( d(x)^{4-n}\big)\int_{\rho}^{+\infty}\frac{r^{n-1}}{(1+(\lambda r)^2)^{\frac{n+4}{2}}}\,\mathrm{d}r\\
			=&c_n^{\frac{n+4}{n-4}}\lambda^{\frac{4-n}{2}}O\big( d(x)^{4-n}(\lambda\rho)^{-4}\big),
		\end{aligned}
	\end{equation*}
	where $0<\rho<d(x)/2$. Hence, it can be inferred that
	\begin{equation*}
			\lambda^{\frac{4-n}{2}}\int_{\Omega}U_{x,\lambda}^{\frac{n+4}{n-4}}H(x,y)\,\mathrm{d}y
			=\mathfrak {a}_nc_n^{\frac{n+4}{n-4}}\lambda^{4-n}R(x)+\lambda^{4-n}O\big(\rho d(x)^{3-n}\big)+\lambda^{4-n}O\big(d(x)^{4-n}(\lambda\rho)^{-4}\big).
	\end{equation*}
	 Choosing $\rho=d(x)^\frac{1}{5}\lambda^{-\frac{4}{5}}$ when $\lambda$ is large enough, we obtain the second term estimate in \eqref{A1.15},
	\begin{equation}\label{A1.24}
		\lambda^{\frac{4-n}{2}}\int_{\Omega}U_{x,\lambda}^{\frac{n+4}{n-4}}H(x,y)\,\mathrm{d}y=\mathfrak {a}_nc_n^{\frac{n+4}{n-4}}	\lambda^{4-n}R(x)+O\big((d(x)\lambda)^{\frac{16}{5}-n}\big).
	\end{equation}
	We turn to the last term in \eqref{A1.15}. Using \eqref{A1.3} and \eqref{A1.14}, we have
	\begin{equation}\label{A1.25}
		\begin{aligned}
			\int_{\Omega}U_{x,\lambda}^{\frac{n+4}{n-4}}f_{x,\lambda}\,\mathrm{d}y\leq|f_{x,\lambda}|_{L^{\infty}(\Omega)}\int_{\mathbb{R}^n}U_{x,\lambda}^{\frac{n+4}{n-4}}\,\mathrm{d}y=O\big((d(x)\lambda)^{2-n}\big).
		\end{aligned}
	\end{equation}
	Consequently, substituting \eqref{A1.16}, \eqref{A1.24}, \eqref{A1.25} into \eqref{A1.15}, we conclude that \eqref{A1.10} holds.
	
	We now prove \eqref{A1.10+}. From \eqref{A1.13}, we know that
	\begin{equation}\label{A1.26}
			\int_{\Omega}V|PU_{x,\lambda}|^{2}\,\mathrm{d}y
			=\int_{\Omega}VU_{x,\lambda}^{2}\,\mathrm{d}y+\bigg(-2\int_{\Omega}VU_{x,\lambda}\theta_{x,\lambda}\,\mathrm{d}y+\int_{\Omega}V\theta_{x,\lambda}^2\,\mathrm{d}y\bigg).
	\end{equation}
	Recalling that 
	$0\leq \theta_{x,\lambda}(y)\leq U_{x,\lambda}(y)$ for all $y\in\Omega$ (Proposition 2.1 in \cite{AH2006}), which yields
	\begin{equation}\label{A1.27}
			\bigg|-2\int_{\Omega}VU_{x,\lambda}\theta_{x,\lambda}\,\mathrm{d}y+\int_{\Omega}V\theta_{x,\lambda}^2\,\mathrm{d}y\bigg|
			\leq 2|V|_{L^\infty(\Omega)}|\theta_{x,\lambda}|_{L^\infty(\Omega)}\int_{\Omega}U_{x,\lambda}\,\mathrm{d}y
			=O\big((\lambda d(x))^{4-n}\big).
	\end{equation}
	For $n=8$, let $B_\tau(x)\subset \Omega \subset B_R(x)$, where $\tau$ is chosen sufficiently small to satisfy two conditions: $o_{\tau}(1)\to0$ as $\tau\to0$, and $\tau\lambda\to\infty$. We further extend $V$ by zero to the complement $B_R(x)\setminus\Omega$. Then we have
	\begin{equation*}
		\begin{aligned}
			\int_{\Omega\setminus B_{\tau}(x)}VU_{x,\lambda}^{2}\,\mathrm{d}y=&\int_{B_R(x)\setminus B_{\tau}(x)}VU_{x,\lambda}^{2}\,\mathrm{d}y
			\leq|V|_{L^{\infty}(B_R(x)\setminus B_{\tau}(x))}\int_{B_R(x)\setminus B_{\tau}(x)}U_{x,\lambda}^{2}\,\mathrm{d}y\\
			=&\omega_8c_8^2|V|_{L^{\infty}(B_R(x)\setminus B_{\tau}(x))}\lambda^{-4}\int_{\tau\lambda}^{R\lambda}\frac{t^{7}}{(1+t^2)^{4}}\,\mathrm{d}t
			=O\big(\lambda^{-4}\log(R/\tau)\big)
		\end{aligned}
	\end{equation*}
	and 
	\begin{equation}\label{A1.29}
		\begin{aligned}
			\int_{B_{\tau}(x)}VU_{x,\lambda}^{2}\,\mathrm{d}y=&\omega_8c_8^2(V(x)+o_{\tau}(1))\int_{0}^\tau \frac{\lambda^4r^{7}}{(1+\lambda^2r^2)^{4}}\,\mathrm{d}r\\
			=&\omega_8c_8^2V(x)\lambda^{-4}\log\lambda+O\bigg(\frac{\log\tau}{\lambda^4}\bigg)+o_{\tau}(1)O\bigg(\frac{\log\lambda}{\lambda^{4}}\bigg).
		\end{aligned}
	\end{equation}
	By choosing $\tau=\frac{1}{\log\lambda}$ and taking into account \eqref{A1.26}–\eqref{A1.29}, the first case in \eqref{A1.10+} is proved.
	
	For $n\geq 9$, we select a sequence $ \eta= \eta_\lambda\leq d(x)$ such that $ \eta \to 0$ and $\eta\lambda \to +\infty$ as $\lambda \to +\infty$. By the continuity of $V$,
	\begin{equation}\label{A1.30}
		\begin{aligned}
			\int_{\Omega}VU_{x,\lambda}^2\,\mathrm{d}y=&(V(x)+o(1))\int_{B_{\eta}(x)}U_{x,\lambda}^2\,\mathrm{d}y+\int_{\Omega\setminus B_{\eta}(x)}VU_{x,\lambda}^2\,\mathrm{d}y\\
			=&\lambda^{-4}\mathfrak {b}_nc_n^2V(x)+o(\lambda^{-4})+O\bigg(\int_{\Omega\setminus B_{\eta}(x)}U_{x,\lambda}^2\,\mathrm{d}y\bigg)\\
			%=&\lambda^{-4}\mathfrak {b}_nc_n^2V(x)+o(\lambda^{-4})+O(\lambda^{-4}(\eta\lambda)^{8-n})\\
			=&\lambda^{-4}\mathfrak {b}_nc_n^2V(x)+o(\lambda^{-4}).\\
		\end{aligned}
	\end{equation}
	Combining \eqref{A1.26}, \eqref{A1.27} and \eqref{A1.30}, the proof of \eqref{A1.10+} is finished.
	
	We now prove \eqref{A1.10++}. Thanks to $2^\star>2$, Taylor's expansion tells us that
	$$|b^{2^\star}-(b-a)^{2^\star}-2^\star b^{2^\star-1}a|\leq\frac{2^\star(2^\star-1)}{2}b^{2^\star-2}a^2\hspace{4mm}\text{for any}\hspace{2mm}a\in[0,b].$$
	By choosing $b=U_{x,\lambda}$ and $a=\theta_{x,\lambda}$, it follows that
	\begin{equation}\label{A1.31}
		\big|PU_{x,\lambda}^{2^\star}-U_{x,\lambda}^{2^\star}+2^\star U_{x,\lambda}^{2^\star-1}\theta_{x,\lambda}\big|\leq\frac{2^\star(2^\star-1)}{2}U_{x,\lambda}^{2^\star-2}\theta_{x,\lambda}^2.
	\end{equation}
	Then we shall estimate the right-hand side of \eqref{A1.31}. Let $\delta=\frac{d(x)}{2}$. We first treat the case $n=8$ on $B_{\delta}(x)$, for which we have
	\begin{equation}\label{A2.1}
			\int_{B_{\delta}(x)}U_{x,\lambda}^{2^\star-2}\theta_{x,\lambda}^2\,\mathrm{d}y
			\leq O\big((d(x)\lambda)^{-8}\big)\int_{0}^{\delta\lambda}\frac{t^{7}}{(1+t^2)^4}\,\mathrm{d}t
			=O\bigg(
			\frac{\log\lambda}{(d(x)\lambda)^{8}}\bigg)+o\bigg(	\frac{\log\lambda}{(d(x)\lambda)^{8}}\bigg).
	\end{equation}
Turning to the case $n\geq9$ on $B_{\delta}(x)$, it holds that 
	\begin{equation*}
			\int_{B_{\delta}(x)}U_{x,\lambda}^{2^\star-2}\theta_{x,\lambda}^2\,\mathrm{d}y\leq
			|\theta_{x,\lambda}|^2_{L^\infty(B_{\delta})}\int_{B_{\delta}(x)}U_{x,\lambda}^{2^\star-2}\,\mathrm{d}y
			=O\big((d(x)\lambda)^{-n}\big).
	\end{equation*}
	Next, we estimate $U_{x,\lambda}^{2^\star-2}\theta_{x,\lambda}^2$ on
	$\Omega\setminus B_{\delta}(x)$. Using the H\"older inequality and the fact that
	$|\theta_{x,\lambda}|_{L^{2^\star}(\Omega)}=O\big((\lambda d(x))^{-\frac{n-4}{2}}\big)$ (Proposition 2.1 in \cite{AH2006}), we derive
	\begin{equation}\label{A2.2}
		\int_{\Omega\setminus B_\delta(x)}U_{x,\lambda}^{2^\star-2}\theta_{x,\lambda}^2\,\mathrm{d}y\leq\bigg(\int_{\Omega}\theta_{x,\lambda}^{2^\star}\,\mathrm{d}y\bigg)^{\frac{2}{2^\star}}\bigg(\int_{\mathbb{R}^n\setminus B_{\delta}(x)}U_{x,\lambda}^{2^\star},\mathrm{d}y\bigg)^{\frac{2^\star-2}{2^\star}}=O\big((d(x)\lambda)^{-n}\big).
	\end{equation}
	Hence, by \eqref{A2.1}–\eqref{A2.2} we arrive at
	\begin{equation}\label{A1.35}
		\int_{\Omega}U_{x,\lambda}^{2^\star-2}\theta_{x,\lambda}^2\,\mathrm{d}y=
		\left\lbrace
		\begin{aligned}
			O\bigg(
			\frac{\log\lambda}{(d(x)\lambda)^{8}}\bigg)+o\bigg(	\frac{\log\lambda}{(d(x)\lambda)^{8}}\bigg)&\hspace{9mm}\text{if}\hspace{2mm}n=8,\\
			O\big((d(x)\lambda)^{-n}\big)&\hspace{9mm}\text{if}\hspace{2mm}n\geq9,
		\end{aligned}
		\right.
	\end{equation}
	In light of \eqref{A1.16}, \eqref{A1.31} and \eqref{A1.35}, the estimate \eqref{A1.10++} follows.
\end{proof}

\begin{cor}\label{cor2.2}
	As $\varepsilon\to0^+$, we have
	\begin{equation}\label{A1.39}
		S(\varepsilon V)\leq\left\lbrace
		\begin{aligned}
			\mathcal{S}_2-e^{-\frac{c_8^2 }{10\Phi_8\varepsilon}(1+o(1))}\hspace{9mm}\text{if}\hspace{2mm}n=8,\\
			\mathcal{S}_2-\mathfrak{C}_n\Phi_n\varepsilon^{\frac{n-4}{n-8}}+o(\varepsilon^{\frac{n-4}{n-8}})\hspace{9mm}\text{if}\hspace{2mm}n\geq 9,
		\end{aligned}
		\right.
	\end{equation}
\end{cor}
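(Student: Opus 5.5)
The bound will come from inserting a test function into the definition of $S(\varepsilon V)$: we have $S(\varepsilon V)\le S_{\varepsilon V}[PU_{x,\lambda}]$ for every $x\in\Omega$ and $\lambda>0$. The plan is to freeze $x$ at a point of $\mathcal{N}(V)$ that nearly realizes the supremum defining $\Phi_n$, and then to minimize the expansion \eqref{A1.10+++} of Theorem \ref{thm2.1} in $\lambda$. Concretely, given $\delta>0$, I pick $x_\delta\in\mathcal{N}(V)$ with $R(x_\delta)^{-4/(n-8)}|V(x_\delta)|^{(n-4)/(n-8)}\ge\Phi_n-\delta$ (respectively $R(x_\delta)^{-1}|V(x_\delta)|\ge\Phi_8-\delta$ when $n=8$). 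Since $x_\delta$ is a fixed interior point, $d(x_\delta)>0$ is fixed, so the hypothesis $d(x)\lambda\to\infty$ of Theorem \ref{thm2.1} holds automatically as $\lambda\to\infty$. All the error terms $o((d\lambda)^{4-n})$ then reduce to $o(\lambda^{4-n})$, and $R(x_\delta)>0$ by \eqref{A1.17}.

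For $n\ge9$, \eqref{A1.10+++} gives
\[
S_{\varepsilon V}[PU_{x,\lambda}]=\mathcal{S}_2+\mathcal{S}_2^{1-\frac n4}\Big(A\lambda^{4-n}-\varepsilon B\lambda^{-4}\Big)+o(\lambda^{4-n})+o(\varepsilon\lambda^{-4}),
\]
with $A=\mathfrak a_nc_n^{2^\star}R(x)$ and $B=\mathfrak b_nc_n^2|V(x)|$. The function $f(\lambda)=A\lambda^{4-n}-\varepsilon B\lambda^{-4}$ is minimized where $(n-4)A\lambda^{8-n}=4\varepsilon B$, that is, at $\lambda_\varepsilon^{n-8}=\frac{(n-4)A}{4B\varepsilon}$. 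This matches the blow-up rate announced in Theorem \ref{thm2}, and in particular $\lambda_\varepsilon\to\infty$. At this $\lambda_\varepsilon$ both error terms are $o(\varepsilon^{(n-4)/(n-8)})$, because $\lambda_\varepsilon^{4-n}$ and $\varepsilon\lambda_\varepsilon^{-4}$ are both of order $\varepsilon^{(n-4)/(n-8)}$. The minimum value is
\[
f(\lambda_\varepsilon)=-\frac{n-8}{n-4}\Big(\frac{4}{n-4}\Big)^{\frac4{n-8}}A^{-\frac4{n-8}}B^{\frac{n-4}{n-8}}\varepsilon^{\frac{n-4}{n-8}}.
\]
Multiplying by $\mathcal{S}_2^{1-n/4}$ and simplifying the powers of $c_n$, namely $-\frac{4}{n-8}2^\star+\frac{2(n-4)}{n-8}=\frac{2n}{n-4}-\frac{8}{n-8}$, should reproduce exactly $\mathfrak C_nR(x)^{-4/(n-8)}|V(x)|^{(n-4)/(n-8)}$. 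Checking this constant is the main bookkeeping step. The outcome is $S(\varepsilon V)\le\mathcal{S}_2-\mathfrak C_n(\Phi_n-\delta)\varepsilon^{(n-4)/(n-8)}+o(\varepsilon^{(n-4)/(n-8)})$. Letting $\delta\to0$ along a diagonal sequence gives the claim.

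For $n=8$, the expansion reads $\mathcal{S}_2+\mathcal{S}_2^{-1}\lambda^{-4}\big(A-\varepsilon B\log\lambda\big)+o(\lambda^{-4})+o(\varepsilon\lambda^{-4}\log\lambda)$, with $A=\mathfrak a_8c_8^4R(x)$ and $B=\mathfrak b_8c_8^2|V(x)|$. The bracket must become negative of order $1$ relative to $\lambda^{-4}$, so I take $\log\lambda=\frac{A}{\varepsilon B}(1+\eta)$ with $\eta>0$ fixed. The bracket is then $-\eta A$, and the error terms are $o(\lambda^{-4})$ because $\varepsilon\log\lambda=O(1)$. This yields $\mathcal{S}_2-S(\varepsilon V)\ge c\lambda^{-4}=\exp\big(-\frac{4A}{B\varepsilon}(1+\eta)+O(1)\big)$. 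Using $\mathfrak a_8=\frac{2\omega_8}{80}$ and $\mathfrak b_8=\omega_8$, one gets $\frac{4A}{B}=\frac{c_8^2R(x)}{10|V(x)|}$. Letting $\eta,\delta\to0$ slowly as $\varepsilon\to0$ gives $e^{-\frac{c_8^2}{10\Phi_8\varepsilon}(1+o(1))}$.

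The main obstacle is making the limiting arguments uniform. The constants hidden in the $o(\cdot)$ terms depend on $x_\delta$ through $d(x_\delta)$ and through the modulus of continuity of $V$ at $x_\delta$. To handle this, I would fix $\delta$ (and $\eta$) first, take $\limsup_{\varepsilon\to0}$ of the normalized quantity, and only afterwards send $\delta,\eta\to0$. A further point is whether the supremum defining $\Phi_n$ could be approached only near $\partial\Omega$. This is harmless: the construction only requires each $x_\delta$ to be a fixed interior point.
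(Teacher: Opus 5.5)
Your proof is correct, and it differs from the paper's in two ways.

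\textbf{Choice of concentration point.} The paper uses the two-sided bound \eqref{A3.1+}, $d(x)^{4-n}\lesssim R(x)\lesssim d(x)^{4-n}$, which relies on the $C^2$ boundary. With it, the paper shows that $R^{-1}|V|$ (for $n=8$) and $R^{-\frac{4}{n-8}}|V|^{\frac{n-4}{n-8}}$ (for $n\geq9$) extend continuously to $\overline{\mathcal{N}(V)}$, vanishing on $\partial\mathcal{N}(V)$. Hence $\Phi_n$ is attained at some $z_0\in\mathcal{N}(V)$, and the paper plugs in $x=z_0$ directly. You instead take near-maximizers $x_\delta$ and send $\delta\to0$ only after $\varepsilon\to0$, so attainment is never needed. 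You still implicitly use $\Phi_n<\infty$, which is where the lower bound $R\gtrsim d^{4-n}$ enters; you should state that.

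\textbf{Choice of $\lambda$ for $n=8$.} The paper inserts the exact critical point $\log\lambda=\frac14+\frac{A}{\varepsilon B}$, with $R(z_0)+o(1)$ and $V(z_0)+o(1)$ in place of $R(z_0)$ and $V(z_0)$. At that point the main term is only of size $\varepsilon\lambda^{-4}$, while the remainders in \eqref{A1.10+++} are merely $o(\lambda^{-4})$. The paper therefore has to absorb the remainders into the $o(1)$ corrections of $A$ and $B$. This step is somewhat implicit, since those $o(1)$ terms depend on the very $\lambda$ being chosen.

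Your detuned choice $\log\lambda=\frac{A}{\varepsilon B}(1+\eta)$ makes the bracket equal to $-\eta A$, which is of order one. The remainders are then visibly negligible, and you get
\[
\liminf_{\varepsilon\to0}\varepsilon\log\big(\mathcal{S}_2-S(\varepsilon V)\big)\geq-\frac{c_8^2R(x_\delta)}{10|V(x_\delta)|}(1+\eta).
\]
The factor $(1+\eta)$ lost in the exponent is exactly what the $(1+o(1))$ in the statement tolerates once $\eta,\delta\to0$. Indeed, the $n=8$ claim is equivalent to $\liminf_{\varepsilon\to0}\varepsilon\log(\mathcal{S}_2-S(\varepsilon V))\geq-\frac{c_8^2}{10\Phi_8}$.

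\textbf{Case $n\geq9$.} Here your computation coincides with the paper's: the same optimal $\lambda_\varepsilon$, the same minimum value, and the same constant $\mathfrak{C}_n$. Your exponent identity for $c_n$ is also correct.

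Overall, your route trades the explicit boundary analysis and the exact optimization for a limiting argument, and it is somewhat more robust in the $n=8$ case.
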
	
\begin{proof}
	Observe that, following the argument of \cite[(2.8)]{Rey1990} and using the Navier boundary condition, an iterated application of the maximum principle implies
	\begin{equation}\label{A3.1+}
		d(x)^{4-n}\lesssim R(x)\lesssim d(x)^{4-n}.
	\end{equation}
	(Note that this bound uses the $C^2$ assumption on $\partial\Omega$.) For $n=8$, due to the continuity of $V(x)$ on the bounded domain 
	$\Omega$ and \eqref{A3.1+}, we can further extend the function $R(x)^{-1}|V(x)|$ on $\overline{{\mathcal{N}(V)}}$, and it vanishes on $\partial\mathcal{N}(V)$. Consequently, there exists a point $z_0\in\mathcal{N}(V)$ such that $\Phi_8=R(z_0)^{-1}|V(z_0)|$. By substituting  $x=z_0$ and $\lambda=e^{\frac{1}{4}}e^{\frac{\mathfrak {a}_8c_8^2\big(R(z_0)+o(1)\big)}{\mathfrak {b}_8\big(|V(z_0)|+o(1)\big)\varepsilon}}$ into \eqref{A1.10+++}, where $\lambda$ is the extremum of $\frac{1}{\mathcal{S}_2\lambda^{4}}(\mathfrak {a}_8c_8^{4}(R(z_0)+o(1))+\varepsilon \mathfrak {b}_8c_8^2(V(z_0)+o(1))\log\lambda)$, we have
	$$S(\varepsilon V)\leq S_{\varepsilon V}[PU_{x,\lambda}]=\mathcal{S}_2-\frac{\varepsilon \mathfrak {b}_8c_8^2|V(z_0)|(1+o(1))}{4 e\mathcal{S}_2}e^{-\frac{c_8^2\big(R(z_0)+o(1)\big)}{10\varepsilon\big(|V(z_0)|+o(1)\big)}}.$$
	In conjunction with the fact that
	\begin{equation}\label{A1.41}
		\varepsilon b e^{-\frac{a}{\varepsilon}}=e^{-\frac{a}{\varepsilon}+o(\frac{1}{\varepsilon})}
	\end{equation}
	for any $a\geq0$ and $b>0$, we derive 
	$$S(\varepsilon V)\leq \mathcal{S}_2-e^{-\frac{c_8^2R(z_0)}{10|V(z_0)|\varepsilon}\big(1+o(1)\big)},$$
	which implies \eqref{A1.39}.
	
	On the other hand, for $n\geq9$, we can show $R(x)^{-\frac{4}{n-8}}|V(x)|^{\frac{n-4}{n-8}}$ is bounded on $\Omega$. Since $V=0$ on $\partial\mathcal{N}(V)\setminus\partial\Omega$, the function $R(x)^{-\frac{4}{n-8}}|V(x)|^{\frac{n-4}{n-8}}$ can be extended to a continuous function on $\overline{{\mathcal{N}(V)}}$ vanishing on $\partial\mathcal{N}(V)$. Consequently, there exists a point 
	$z_0\in\mathcal{N}(V)$ such that $\Phi_n=R(z_0)^{-\frac{4}{n-8}}|V(z_0)|^{\frac{n-4}{n-8}}$. We substitute $x=z_0$ and $\lambda=\big(\frac{(n-4)\mathfrak {a}_nc_n^{\frac{8}{n-4}}R(z_0)}{4\varepsilon \mathfrak {b}_n|V(z_0)|}\big)^{\frac{1}{n-8}}$ into \eqref{A1.10+++}, where 
	$\lambda$ is the extremum of the expression $\frac{\mathfrak {a}_nc_n^{2^\star}R(z_0)}{{\lambda^{n-4}}}+\frac{\varepsilon \mathfrak {b}_nc_n^2V(z_0)}{\lambda^{4}}$. The proof of the corollary is now complete.
\end{proof}

\section{Lower Bound Estimate}
In this section, we will present the lower bound estimate for the minimal energy $S(\varepsilon V)$. We begin by an asymptotic decomposition for a general almost minimizing sequence of $S(\varepsilon V)$.
\begin{Prop}\label{Prop1}
	Let $\{u_\varepsilon\}\subset \mathcal{H}$ be a sequence of functions satisfying
	\begin{equation*}
		S_{\varepsilon V}[u_\varepsilon]\to \mathcal{S}_2\hspace{4mm}\text{and}\hspace{4mm}
		\int_{\Omega}|u_\varepsilon|^{2^\star}dx=\mathcal{S}_2^{\frac{n}{4}}.
	\end{equation*}
	Then, up to a subsequence,
	\begin{equation}\label{toukui}
		u_\varepsilon=\alpha_\varepsilon(PU_{x_\varepsilon,\lambda_\varepsilon}+v_\varepsilon)
	\end{equation}
	with
	\begin{equation*}
		\alpha_\varepsilon\to s\hspace{6mm}\text{for some}\hspace{2mm}s\in\{-1,1\},
	\end{equation*}
	\begin{equation}\label{dingyi}
		x_\varepsilon\to x_0\hspace{6mm}\text{for some}\hspace{2mm}x_0\in\overline{\Omega},
	\end{equation}
	\begin{equation}\label{nanjing}
		\lambda_\varepsilon d_\varepsilon\to+\infty,
	\end{equation}
	\begin{equation*}
		\|v_\varepsilon\|\to0,
	\end{equation*}
	where $v_\varepsilon\in T_{x,\lambda}^\perp$ and $d_\varepsilon=\dist(x_\varepsilon,\partial\Omega)$.
\end{Prop}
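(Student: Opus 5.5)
Since $\varepsilon\to0$ and $V$ is bounded, the first step is to show that $(u_\varepsilon)$ is a minimizing sequence for the unperturbed Sobolev quotient. The normalization gives $\int_\Omega|u_\varepsilon|^{2^\star}=\mathcal{S}_2^{n/4}$, so $\|u_\varepsilon\|^2\leq \mathcal{S}_2^{n/4}S_{\varepsilon V}[u_\varepsilon]+\varepsilon|V|_{L^\infty}|u_\varepsilon|^2_{L^2}$. By H\"older, $|u_\varepsilon|_{L^2}\lesssim|u_\varepsilon|_{L^{2^\star}}$ is bounded. Hence $\|u_\varepsilon\|^2\to\mathcal{S}_2^{n/4}$ and $\|u_\varepsilon\|^2/|u_\varepsilon|_{L^{2^\star}}^2\to\mathcal{S}_2$.

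Next, extend $u_\varepsilon$ by zero to $\mathbb{R}^n$. Functions in $H^2\cap H^1_0$ extended by zero lie in $D^{1,2}$ but generally not in $D^{2,2}$, so I will use the Navier-adapted fact that $\int_\Omega|\Delta u|^2\geq\mathcal{S}_2|u|^2_{L^{2^\star}}$ for $u\in\mathcal{H}$, with the constant not attained (van der Vorst). I then apply the concentration--compactness argument (profile decomposition for optimizing sequences of the Sobolev inequality) to get $\mu_\varepsilon>0$, points $z_\varepsilon$ and a sign $s$ such that
\[
u_\varepsilon- sU_{z_\varepsilon,\mu_\varepsilon}\to0\quad\text{in }L^{2^\star},
\]
and in fact in the energy norm after localization. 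Since $\Omega$ is bounded and $\mathcal{S}_2$ is not attained on $\Omega$, dilation must occur: $\mu_\varepsilon\to\infty$. Moreover $z_\varepsilon$ stays within $O(\mu_\varepsilon^{-1})$ of $\overline\Omega$, and $\mu_\varepsilon\,\dist(z_\varepsilon,\partial\Omega)\to+\infty$, with $z_\varepsilon\in\Omega$. For the last point, suppose instead that $\mu_\varepsilon d(z_\varepsilon)$ stays bounded. Blowing up at scale $\mu_\varepsilon$ then gives a nonzero limit profile $w$ supported in a half-space, with $w\in\mathcal{H}$-type boundary conditions there and achieving $\mathcal{S}_2$. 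This is impossible, since minimizers of \eqref{Sobolev inequality} are the positive functions $U_{x,\lambda}$, which never vanish. Using \eqref{A1.13}–\eqref{A1.14} and $|\theta|_{L^{2^\star}}$, $\|\theta\|\to0$ when $\lambda d\to\infty$, I can then replace $U$ by $PU$ and obtain $\|u_\varepsilon-sPU_{z_\varepsilon,\mu_\varepsilon}\|\to0$.

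Finally, I obtain the orthogonal decomposition by minimizing $\|u_\varepsilon-\alpha PU_{x,\lambda}\|$ over $(\alpha,x,\lambda)$ in a neighborhood $|\alpha-s|<\delta$, $|x-z_\varepsilon|<\delta/\mu_\varepsilon$, $|\lambda/\mu_\varepsilon-1|<\delta$. This is the standard Bahri–Coron argument. Since the infimum is $o(1)$ while the boundary of the neighborhood gives a distance bounded below, the minimum is attained in the interior. The Euler–Lagrange conditions at the minimizer give exactly $u_\varepsilon/\alpha_\varepsilon-PU_{x_\varepsilon,\lambda_\varepsilon}=:v_\varepsilon\in T^\perp_{x_\varepsilon,\lambda_\varepsilon}$, with $\|v_\varepsilon\|\to0$ and $\alpha_\varepsilon\to s$. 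Passing to a subsequence, $x_\varepsilon\to x_0\in\overline\Omega$, and $\lambda_\varepsilon d_\varepsilon\to\infty$ is inherited from the corresponding property of $z_\varepsilon,\mu_\varepsilon$.

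I expect the main obstacle to be excluding boundary-scale concentration ($\lambda d$ bounded) and the loss of $D^{2,2}$ regularity under zero extension. My approach is to work with the Navier structure: set $w=-\Delta u$, so that both $u$ and $w$ vanish on $\partial\Omega$, and carry out the rescaled limit in the half-space with Navier conditions, where odd reflection gives a sign-changing $D^{2,2}(\mathbb{R}^n)$ optimizer. That contradicts the classification in \cite{Lin1998,WX1999}, which allows only positive optimizers up to sign.
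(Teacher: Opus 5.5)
\textbf{Verdict: there is a genuine gap, at exactly the step you flagged as the crux.} The rest of your architecture matches the paper's. Both arguments reduce to an optimizing sequence for the Sobolev quotient, extract one bubble by Lions' concentration--compactness, and exclude boundary-scale concentration. Both then replace $U$ by $PU$ and obtain the orthogonal decomposition by the Bahri--Coron minimization. The paper cites \cite[Lemma 2.2]{EA2003} for this last step; your local-neighbourhood version, and your inheritance of $\lambda_\varepsilon d_\varepsilon\to\infty$, are fine.

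You are right that the zero extension of $u\in\mathcal{H}$ is not in $D^{2,2}(\mathbb{R}^n)$: its distributional Laplacian is $\mathbf{1}_\Omega\Delta u-\partial_\nu u\,d\sigma_{\partial\Omega}$. The paper's proof applies Lions' theorem to the zero extensions without addressing this.

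The device you propose for this obstacle does not work. Suppose $w$ attains $\mathcal{S}_2$ on a half-space under Navier conditions. Its odd reflection $\tilde w$ does lie in $D^{2,2}(\mathbb{R}^n)$. But reflection doubles both $\int|\Delta w|^2$ and $\int|w|^{2^\star}$, so
\[
\frac{\int_{\mathbb{R}^n}|\Delta\tilde w|^2}{\big(\int_{\mathbb{R}^n}|\tilde w|^{2^\star}\big)^{2/2^\star}}=2^{1-\frac{2}{2^\star}}\mathcal{S}_2=2^{\frac4n}\mathcal{S}_2>\mathcal{S}_2 .
\]
Hence $\tilde w$ is not an optimizer, and nothing contradicts \cite{Lin1998,WX1999}. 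Reading ``optimizer'' as ``solution'' does not help either, since those classifications say nothing about sign-changing solutions.

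Your other formulation also fails. It says a half-space profile attaining $\mathcal{S}_2$ is impossible because $U_{x,\lambda}>0$. That presupposes either that zero extensions lie in $D^{2,2}(\mathbb{R}^n)$, which you rejected, or that $u_\varepsilon-sU_{z_\varepsilon,\mu_\varepsilon}\to0$ in $L^{2^\star}$ is already known. You give no way to obtain the latter for functions outside $D^{2,2}(\mathbb{R}^n)$.

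\textbf{Repair.} The missing idea is van der Vorst's comparison argument, i.e.\ the proof of the Navier--Sobolev inequality you quote.
\begin{itemize}
\item Let $W_\varepsilon$ be the Newtonian potential of $|\Delta u_\varepsilon|\mathbf{1}_\Omega$, so $-\Delta W_\varepsilon=|\Delta u_\varepsilon|\mathbf{1}_\Omega$ in $\mathbb{R}^n$. Then $W_\varepsilon\in D^{2,2}(\mathbb{R}^n)$ and $\|\Delta W_\varepsilon\|_{L^2(\mathbb{R}^n)}=\|u_\varepsilon\|$. By the maximum principle, $|u_\varepsilon|\le W_\varepsilon$ in $\Omega$.
\item Consequently $(W_\varepsilon)$ is a genuine optimizing sequence in $D^{2,2}(\mathbb{R}^n)$, with $\int_{\mathbb{R}^n}W_\varepsilon^{2^\star}-\int_\Omega|u_\varepsilon|^{2^\star}\to0$. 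Lions' theorem applied to $W_\varepsilon$ gives, up to a subsequence, $W_\varepsilon-U_{z_\varepsilon,\mu_\varepsilon}\to0$ in $D^{2,2}(\mathbb{R}^n)$.
\item The pointwise domination then yields $|u_\varepsilon|-U_{z_\varepsilon,\mu_\varepsilon}\to0$ in $L^{2^\star}(\Omega)$ and $\int_{\mathbb{R}^n\setminus\Omega}U_{z_\varepsilon,\mu_\varepsilon}^{2^\star}\to0$. With the exterior ball condition for a $C^2$ boundary, this forces $\mu_\varepsilon\dist(z_\varepsilon,\mathbb{R}^n\setminus\Omega)\to\infty$. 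This is exactly the paper's mass argument, and no half-space blow-up is needed.
\item For the sign, write $-\Delta u_\varepsilon=f^+-f^-$ and let $u_1,u_2\in\mathcal{H}$ solve $-\Delta u_1=f^+$, $-\Delta u_2=f^-$ in $\Omega$. Then $u_\varepsilon=u_1-u_2$ with $u_i\ge0$, and
\[
\|u_\varepsilon\|^2=\|u_1\|^2+\|u_2\|^2\ge\mathcal{S}_2\big(|u_\varepsilon^+|_{L^{2^\star}(\Omega)}^2+|u_\varepsilon^-|_{L^{2^\star}(\Omega)}^2\big).
\]
Strict concavity of $t\mapsto t^{2/2^\star}$ then forces one of $u_\varepsilon^\pm$ to vanish in $L^{2^\star}$.
\item Your ``energy norm after localization'' can be replaced by the identity
\[
\|u_\varepsilon-sPU_{z_\varepsilon,\mu_\varepsilon}\|^2=\|u_\varepsilon\|^2-2s\int_\Omega u_\varepsilon U_{z_\varepsilon,\mu_\varepsilon}^{2^\star-1}\,dy+\|PU_{z_\varepsilon,\mu_\varepsilon}\|^2\to0 .
\]
\end{itemize}

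Minor point: in your first step the prefactor should be $\mathcal{S}_2^{(n-4)/4}$, not $\mathcal{S}_2^{n/4}$. The conclusion $\|u_\varepsilon\|^2\to\mathcal{S}_2^{n/4}$ is unaffected.
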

\begin{proof}
	The assumptions imply that sequence $\{u_\varepsilon\}$ is bounded in $\mathcal{H}$, hence, up to a subsequence, we may assume $u_\varepsilon\rightharpoonup u_0$ for some $u_0\in\mathcal{H}$. By the argument given in \cite[Proof of Proposition 3.1, Step 1]{FKK2021}, together with the fact that $\mathcal{S}_2$ is never achieved except when $\Omega=\mathbb{R}^n$, we conclude that $u_0\equiv0$. 
	
	By the Rellich theorem, $u_\varepsilon\rightarrow0$ strongly in $L^2(\Omega)$, in particular, $\varepsilon\int_{\Omega}V|u_\varepsilon|^2dx=o(1)$. Therefore,
	\begin{equation*}
		\frac{\int_{\Omega}|\Delta u_\varepsilon|^2dx}{\big(\int_{\Omega}|u_\varepsilon|^{2^\star}dx\big)^{2/{2^\star}}}\rightarrow\mathcal{S}_2.
	\end{equation*}
	Thus, the $u_\varepsilon$, extended by zero to functions in $D^{2,2}(\mathbb{R}^n)$, form a minimizing sequence for the Sobolev quotient. By a theorem of Lions \cite[Theorem I.1]{Lions1985}, there exist $\{z_\varepsilon\}\subset\mathbb{R}^n$ and $\{\mu_\varepsilon\}\subset\mathbb{R}^+$ such that, along a subsequence,
	\begin{equation*}
		\mu_\varepsilon^{-\frac{n-4}{2}}u_\varepsilon(\mu_\varepsilon^{-1}\cdot+z_\varepsilon)\rightarrow\beta U_{0,1}
	\end{equation*}
	in  $D^{2,2}(\mathbb{R}^n)$ for some $\beta\in\mathbb{R}$. By the normalization condition $\int_{\Omega}|u_\varepsilon|^{2^\star}dx=\mathcal{S}_2^{\frac{n}{4}}$, we have $\beta\in\{\pm1\}$. Now, a change of variables $y=z_\varepsilon+\mu_\varepsilon^{-1}x$ (which preserves the $D^{2,2}(\mathbb{R}^n)$-norm) yields
	\begin{equation}\label{kaihui}
		u_\varepsilon(y)=\beta U_{z_\varepsilon,\mu_\varepsilon}(y)+\sigma_\varepsilon,
	\end{equation}
	where $\sigma_\varepsilon\rightarrow0$ in $D^{2,2}(\mathbb{R}^n)$. 
	
	Note that, since the boundary of $\Omega$ is $C^2$, the fact
	\begin{equation*}
		\int_{\mathbb{R}^n}U_{0,1}^{2^\star}=\int_{\Omega}|u_\varepsilon|^{2^\star}dx=\int_{\Omega}U_{z_\varepsilon,\mu_\varepsilon}^{2^\star}dx+o(1)
	\end{equation*}
	implies $\mu_\varepsilon\dist(z_\varepsilon,\mathbb{R}^n\setminus\Omega)\rightarrow+\infty$. In particular, after passing to a subsequence, $z_\varepsilon\rightarrow x_0\in\overline{\Omega}$. We now make the necessary modifications to derive \eqref{toukui} from \eqref{kaihui}. A simple calculation yields
	\begin{equation*}
			\|U_{z_\varepsilon,\mu_\varepsilon}-PU_{z_\varepsilon,\mu_\varepsilon}\|^2
			=\|U_{z_\varepsilon,\mu_\varepsilon}\|^2+\| PU_{z_\varepsilon,\mu_\varepsilon}\|^2-2\int_{\Omega}(\Delta^2U_{z_\varepsilon,\mu_\varepsilon})PU_{z_\varepsilon,\mu_\varepsilon}dy.
	\end{equation*}
	From \eqref{A1.13}, we know
	\begin{equation*}
		\|PU_{z_\varepsilon,\mu_\varepsilon}\|^2=\| U_{z_\varepsilon,\mu_\varepsilon}\|^2-\int_{\Omega}\theta_{z_\varepsilon,\mu_\varepsilon}U_{z_\varepsilon,\mu_\varepsilon}^{2^\star-1}dy=\| U_{z_\varepsilon,\mu_\varepsilon}\|^2+O\big((\mu_\varepsilon\tilde{d}_\varepsilon)^{4-n}\big)
	\end{equation*}
	and
	\begin{equation*}
		-2\int_{\Omega}(\Delta^2U_{z_\varepsilon,\mu_\varepsilon})PU_{z_\varepsilon,\mu_\varepsilon}dy=-2\int_{\Omega}U_{z_\varepsilon,\mu_\varepsilon}^{2^\star}dy+2\int_{\Omega}\theta_{z_\varepsilon,\mu_\varepsilon}U_{z_\varepsilon,\mu_\varepsilon}^{2^\star-1}=-2\| U_{z_\varepsilon,\mu_\varepsilon}\|^2+O\big((\mu_\varepsilon\tilde{d}_\varepsilon)^{4-n}\big),
	\end{equation*}
	where $\tilde{d}_\varepsilon=\dist(z_\varepsilon,\mathbb{R}^n\setminus\Omega)$. Consequently,
	\begin{equation}\label{fuluu}
		\|U_{z_\varepsilon,\mu_\varepsilon}-PU_{z_\varepsilon,\mu_\varepsilon}\|\rightarrow0\hspace{6mm}\text{as}\hspace{2mm}\varepsilon\rightarrow0^+.
	\end{equation}
	
	The key ingredient is provided by \cite[Lemma 2.2]{EA2003}, which extends the corresponding result of Bahri and Coron \cite[Proposition 7]{BC1988}. Specifically, suppose that $u\in\mathcal{H}$ with $\|u\|=\mathcal{S}_2^{\frac{n}{4}}$ satisfies
	\begin{equation}\label{tiaojian}
		\inf\big\{\|u-PU_{x,\lambda}\|: x\in\Omega,  \lambda d(x)>\tilde{\eta}^{-1}\big\}<\tilde{\eta}
	\end{equation}
	for some $\tilde{\eta}>0$. Then, if $\tilde{\eta}$ is small enough, the minimization problem
	\begin{equation}\label{chengzi}
		\inf_{x,\lambda,\alpha}\big\{\|u-\alpha PU_{x,\lambda}\|: x\in\Omega, \lambda d(x)>(4\tilde{\eta})^{-1},\alpha\in(1/2,2)\big\}
	\end{equation}
	has a unique solution.
	
	From \eqref{kaihui} and \eqref{fuluu},
	\begin{equation*}
	    \|u_\varepsilon-PU_{z_\varepsilon,\mu_\varepsilon}\|\leq\|U_{z_\varepsilon,\mu_\varepsilon}-PU_{z_\varepsilon,\mu_\varepsilon}\|+\|\sigma_\varepsilon\|\rightarrow0\hspace{6mm}\text{as}\hspace{2mm}\varepsilon\rightarrow0^+,
	\end{equation*}
	so that \eqref{tiaojian} is satisfied by $u_\varepsilon$ for all $\varepsilon$ small enough, with a constant $\tilde{\eta}_\varepsilon$ tending to zero. We thus obtain the desired decomposition
	\begin{equation*}
		u_\varepsilon=\alpha_\varepsilon(PU_{x_\varepsilon,\lambda_\varepsilon}+v_\varepsilon)
	\end{equation*}
	by taking $(x_\varepsilon,\lambda_\varepsilon,\alpha_\varepsilon)$ to be the solution to \eqref{chengzi} and $\alpha_\varepsilon^{-1}u_\varepsilon-PU_{x_\varepsilon,\lambda_\varepsilon}=v_\varepsilon\in T_{x,\lambda}^\perp$. To verify the claimed asymptotic behavior of the parameters, note that since $\tilde{\eta}_\varepsilon\rightarrow0$, by the definition of minimization problem \eqref{chengzi}, we have $\|v_\varepsilon\|<\tilde{\eta}_\varepsilon\rightarrow0$ and $\lambda_\varepsilon d_\varepsilon>(4\tilde{\eta}_\varepsilon)^{-1}\rightarrow+\infty$. Since $\Omega$ is bounded, the convergence $x_\varepsilon\rightarrow x_0\in\overline{\Omega}$ is ensured by passing to a suitable subsequence. 
	
	Finally, using the normalization condition, we get
	\begin{equation*}
		\int_{\mathbb{R}^n}U_{0,1}^{2^\star}dy=\int_{\Omega}|u_\varepsilon|^{2^\star}dy=|\alpha_\varepsilon|^{2^\star}\int_{\Omega}PU_{x_\varepsilon,\lambda_\varepsilon}^{2^\star}dy+o(1)=|\alpha_\varepsilon|^{2^\star}\int_{\mathbb{R}^n}U_{0,1}^{2^\star}+o(1),
	\end{equation*}
	which implies $\alpha_\varepsilon=\pm1+o(1)$. The proof is finished.
\end{proof}

From now on we will assume that $\{u_\varepsilon\}$ satisfies \eqref{A1.1}. In particular, the assumptions of Proposition \ref{Prop1} are satisfied. We now expand $S_{\varepsilon V}[u_\varepsilon]$ by using the decomposition \eqref{toukui} of $u_\varepsilon$. For simplicity, in what follows, we omit the subscript $\varepsilon$ from $\alpha_\varepsilon$, $x_\varepsilon$, $\lambda_\varepsilon$, $d_\varepsilon$ and $v_\varepsilon$.
\begin{Prop}\label{homogeneity}
	Let $\{u_\varepsilon\}\subset\mathcal{H}$ be a sequence of functions satisfying \eqref{toukui} and \eqref{nanjing}. Then
	\begin{equation}\label{A3.1}
		|\alpha|^{-2}\int_{\Omega}|\Delta u_\varepsilon|^2\,\mathrm{d}y=\int_{\Omega}|\Delta PU_{x,\lambda}|^2\,\mathrm{d}y+\int_{\Omega}|\Delta v|^2\,\mathrm{d}y,
	\end{equation}
	\begin{equation}\label{A3.2}
		|\alpha|^{-2^\star}\int_{\Omega}|u_\varepsilon|^{2^\star}\mathrm{d}y=\int_{\Omega}PU_{x,\lambda}^{2^\star}\,\mathrm{d}y+\frac{2^\star(2^\star-1)}{2}\int_{\Omega}U_{x,\lambda}^{2^\star-2}v^2\,\mathrm{d}y+o\bigg(\int_{\Omega}|\Delta v|^2\,\mathrm{d}y+(\lambda d)^{4-n}\bigg),
	\end{equation}
	\begin{equation}\label{A3.3}
		|\alpha|^{-2}\varepsilon\int_{\Omega}Vu_\varepsilon^2\,\mathrm{d}y=\varepsilon\int_{\Omega}VPU_{x,\lambda}^2\,\mathrm{d}y+O\bigg(\varepsilon\int_{\Omega}|\Delta v|^2\,\mathrm{d}y+\varepsilon\sqrt{\int_{\Omega}|\Delta v|^2\,\mathrm{d}y}\sqrt{\int_{\Omega}|V|PU_{x,\lambda}^2\,\mathrm{d}y}\bigg).
	\end{equation}
	In particular, 
	\begin{equation}\begin{aligned}\label{A3.4}
			S_{\varepsilon V}[u_\varepsilon]=S_{\varepsilon V}&[PU_{x,\lambda}]+I[v]+O\bigg(\varepsilon\sqrt{\int_{\Omega}|\Delta v|^2\,\mathrm{d}y}\sqrt{\int_{\Omega}|V|PU_{x,\lambda}^2\,\mathrm{d}y}\bigg)\\
			&+o\bigg(\int_{\Omega}|\Delta v|^2\,\mathrm{d}y+(\lambda d
			)^{4-n}\bigg),
		\end{aligned}
	\end{equation}
	where
	$$I[v]:=\bigg(\int_{\Omega}U_{x,\lambda}^{2^\star}\,\mathrm{d}y\bigg)^{-\frac{2}{2^\star}}\bigg(\int_{\Omega}|\Delta v|^2\,\mathrm{d}y-(2^\star-1)\int_{\Omega}U_{x,\lambda}^{2^\star-2}v^2\,\mathrm{d}y\bigg).$$
\end{Prop}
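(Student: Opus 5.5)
The plan is to expand each of the three pieces of the quotient separately in the decomposition $u_\varepsilon=\alpha(PU_{x,\lambda}+v)$, using the orthogonality $v\in T_{x,\lambda}^\perp$, and then combine them by a Taylor expansion of the quotient.

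\textbf{Proof of \eqref{A3.1}.} Orthogonality gives this at once: $(PU_{x,\lambda},v)=0$ because $PU_{x,\lambda}\in T_{x,\lambda}$. Hence the cross term in $\|PU_{x,\lambda}+v\|^2$ vanishes, and \eqref{A3.1} is exact.

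\textbf{Proof of \eqref{A3.3}.} I expand $\int V(PU+v)^2=\int V PU^2+2\int V PU\,v+\int Vv^2$. Two bounds handle the remainder.
\begin{itemize}
\item The last term is bounded by $|V|_\infty|v|_{L^2}^2\lesssim \|v\|^2$. This uses the Poincaré-type inequality on $\mathcal{H}$: $|v|_{L^2}\lesssim\|\Delta v\|_{L^2}$, which follows from the first Navier eigenvalue $\mu_1>0$.
\item For the cross term, Cauchy--Schwarz with weight $|V|$ gives
$\big|\int V PU\,v\big|\le (\int |V|PU^2)^{1/2}(\int|V|v^2)^{1/2}\lesssim (\int|V|PU^2)^{1/2}\|v\|$.
\end{itemize}
Multiplying by $\varepsilon$ yields \eqref{A3.3}.

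\textbf{Proof of \eqref{A3.2}.} This is the main obstacle. Write $|PU+v|^{2^\star}$ and expand pointwise:
\[
|a+b|^{2^\star}=|a|^{2^\star}+2^\star|a|^{2^\star-2}ab+\tfrac{2^\star(2^\star-1)}{2}|a|^{2^\star-2}b^2+O\big(|a|^{2^\star-3}|b|^3+|b|^{2^\star}\big),
\]
with the cubic term only present when $2^\star>3$, i.e. $n<12$. For general $n$ I would use the bound $O(\min(|a|^{2^\star-3}|b|^3,|b|^{2^\star})+|b|^{2^\star})$, which is $o(\|v\|^2)$ after Hölder and Sobolev since $\|v\|\to0$. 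The other terms are handled as follows.
\begin{itemize}
\item \emph{Linear term.} We have $\int PU^{2^\star-1}v=\int U^{2^\star-1}v+O(\int U^{2^\star-2}\theta|v|)$. The main part is $\int \Delta^2 U\,v=\int\Delta^2PU\,v=(PU,v)=0$ by orthogonality. The error is bounded by Hölder: $|U^{2^\star-2}\theta|_{L^{(2^\star)'}}\|v\|$. Using $\theta\lesssim \lambda^{(4-n)/2}d^{4-n}$ near $x$ and $|\theta|_{L^{2^\star}}=O((\lambda d)^{-(n-4)/2})$ (Proposition 2.1 of \cite{AH2006}), as in the proof of Theorem~\ref{thm2.1}, this is $o((\lambda d)^{(4-n)/2})\|v\|$, or at worst $O((\lambda d)^{(4-n)/2}\cdot\text{small})\|v\|$. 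Then $ab\le \delta a^2+b^2/\delta$ gives $o(\|v\|^2+(\lambda d)^{4-n})$. I expect a careful split of $\Omega$ into $B_{d/2}(x)$ and its complement to give an extra decay factor, possibly with a $\log$ when $n=8$.
\item \emph{Quadratic term.} Replacing $PU^{2^\star-2}$ by $U^{2^\star-2}$ costs $\int |U^{2^\star-2}-PU^{2^\star-2}|v^2\le |U^{2^\star-2}-PU^{2^\star-2}|_{L^{n/4}}|v|_{2^\star}^2=o(\|v\|^2)$. This holds because $\theta\to0$ relative to $U$ in $L^{2^\star}$.
\end{itemize}
Multiplying through by $|\alpha|^{2^\star}$ gives \eqref{A3.2}.

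\textbf{Proof of \eqref{A3.4}.} The quotient is homogeneous of degree zero, so $\alpha$ cancels. I write the numerator as $N_0+\|v\|^2+E_1$ and the denominator as $(D_0+Q+E_2)^{2/2^\star}$, where
\begin{itemize}
\item $N_0=\|PU\|^2+\varepsilon\int VPU^2$ and $D_0=\int PU^{2^\star}$;
\item $Q=\frac{2^\star(2^\star-1)}{2}\int U^{2^\star-2}v^2$.
\end{itemize}
Expanding $(D_0+Q)^{-2/2^\star}=D_0^{-2/2^\star}(1-\tfrac{2}{2^\star}Q/D_0+O(Q^2))$ and using $N_0/D_0=\mathcal S_2^{n/4}\cdot\mathcal S_2^{-n/4}(1+o(1))$, i.e. $N_0\approx D_0\approx\mathcal S_2^{n/4}$, produces
\[
S_{\varepsilon V}[PU]+D_0^{-2/2^\star}\big(\|v\|^2-(2^\star-1)\textstyle\int U^{2^\star-2}v^2\big)+\dots
\]
Replacing $D_0$ by $\int U^{2^\star}$ changes this only by $o(\|v\|^2)$, which gives $I[v]$. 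The remaining errors, including $\varepsilon\|v\|^2=o(\|v\|^2)$, are absorbed into the stated $O$ and $o$ terms.
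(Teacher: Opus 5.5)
Your proposal is correct and follows essentially the paper's proof: orthogonality for \eqref{A3.1}, a weighted Cauchy--Schwarz bound for \eqref{A3.3}, a second-order pointwise expansion plus orthogonality for \eqref{A3.2}, and a Taylor expansion of the zero-homogeneous quotient for \eqref{A3.4}. The differences are minor: for \eqref{A3.3} you use $|V|_{L^\infty}$ with the Navier Poincar\'e inequality where the paper uses $|V|_{L^{n/4}(\Omega)}$ with Sobolev. The one estimate you only sketch, $|U_{x,\lambda}^{2^\star-2}\theta_{x,\lambda}|_{L^{2^\star/(2^\star-1)}(\Omega)}=o\big((\lambda d)^{\frac{4-n}{2}}\big)$, is exactly Lemma~\ref{ALem1}, proved by the ball/complement split you describe (the logarithm occurs at $n=12$, not $n=8$). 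Finally, expanding around $PU_{x,\lambda}$ rather than $U_{x,\lambda}$ only trades the paper's use of \eqref{A1.35} for your $L^{n/4}$ comparison of $PU_{x,\lambda}^{2^\star-2}$ with $U_{x,\lambda}^{2^\star-2}$.
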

\begin{proof}
	First, using the orthogonality condition $v\in T_{x,\lambda}^\perp$, one can derive \eqref{A3.1} directly.
	
	We now prove \eqref{A3.2}. Since $\alpha^{-1}u_\varepsilon=U_{x,\lambda}+(v-\theta_{x,\lambda})$, we can apply the associated pointwise estimates to get
	$$\begin{aligned}
		&\bigg||\alpha|^{-2^\star}|u_\varepsilon|^{2^\star}-U_{x,\lambda}^{2^\star}-2^\star U_{x,\lambda}^{2^\star-1}(v-\theta_{x,\lambda})-\frac{2^\star(2^\star-1)}{2}U_{x,\lambda}^{2^\star-2}(v-\theta_{x,\lambda})^2\bigg|\\
		\leq&C_1\bigg(|v-\theta_{x,\lambda}|^{2^\star}+|v-\theta_{x,\lambda}|^{2^\star-(2^\star-3)_+}U_{x,\lambda}^{(2^\star-3)_+}\bigg),
	\end{aligned}$$
	where $(2^\star-3)_+=\max\{2^\star-3,0\}$. Using \eqref{A1.31}, we obtain  
	\begin{equation*}\begin{aligned}
			&\bigg||\alpha|^{-2^\star}|u_\varepsilon|^{2^\star}-PU_{x,\lambda}^{2^\star}-2^\star U_{x,\lambda}^{2^\star-1}v-\frac{2^\star(2^\star-1)}{2}U_{x,\lambda}^{2^\star-2}v^2\bigg|\\
			\leq&C_2\big(|v-\theta_{x,\lambda}|^{2^\star}+|v-\theta_{x,\lambda}|^{2^\star-(2^\star-3)_+}U_{x,\lambda}^{(2^\star-3)_+}+U_{x,\lambda}^{2^\star-2}\theta_{x,\lambda}|v|+U_{x,\lambda}^{2^\star-2}\theta_{x,\lambda}^2\big)\\
			%\leq&c_3\big(|v|^{2^\star}+|\theta_{x,\lambda}|^{2^\star}+|v|^{2^\star-(2^\star-3)_+}U_{x,\lambda}^{(2^\star-3)_+}+|\theta_{x,\lambda}|^{2^\star-(2^\star-3)_+}U_{x,\lambda}^{(2^\star-3)_+}+U_{x,\lambda}^{2^\star-2}\theta_{x,\lambda}|v|+U_{x,\lambda}^{2^\star-2}\theta_{x,\lambda}^2\big)\\
			\leq&C_3\big(|v|^{2^\star}+|v|^{2^\star-(2^\star-3)_+}U_{x,\lambda}^{(2^\star-3)_+}+U_{x,\lambda}^{2^\star-2}\theta_{x,\lambda}|v|+U_{x,\lambda}^{2^\star-2}\theta_{x,\lambda}^2\big).\\
		\end{aligned}
	\end{equation*}
	Since $v\in T_{x,\lambda}^\perp$, it holds that
	\begin{equation*}
		\int_{\Omega}U_{x,\lambda}^{2^\star-1}v\,\mathrm{d}y=\int_{\Omega}\Delta U_{x,\lambda}\Delta v\,\mathrm{d}y=\int_{\Omega}\Delta PU_{x,\lambda}\Delta v\,\mathrm{d}y=0.
	\end{equation*}
	Then, by the H\"older inequality,
	\begin{equation*}
		\begin{aligned}
			&\bigg|\int_{\Omega}\bigg(|\alpha|^{-2^\star}|u_\varepsilon|^{2^\star}-PU_{x,\lambda}^{2^\star}-\frac{2^\star(2^\star-1)}{2}U_{x,\lambda}^{2^\star-2}v^2\bigg)\,\mathrm{d}y\bigg|\\
			\leq&C_3\bigg[\int_{\Omega}|v|^{2^\star}\,\mathrm{d}y+\bigg(\int_{\Omega}|v|^{2^\star}\,\mathrm{d}y\bigg)^{\frac{2^\star-(2^\star-3)_+}{2^\star}}\bigg(\int_{\Omega}|U_{x,\lambda}|^{2^\star}\,\mathrm{d}y\bigg)^{\frac{(2^\star-3)_+}{2^\star}}\\
			&+\bigg(\int_{\Omega}|U_{x,\lambda}|^{\frac{2^\star(2^\star-2)}{2^\star-1}}\theta_{x,\lambda}^{\frac{2^\star}{2^\star-1}}\,\mathrm{d}y\bigg)^{\frac{2^\star-1}{2^\star}}\bigg(\int_{\Omega}|v|^{2^\star}\,\mathrm{d}y\bigg)^{\frac{1}{2^\star}}+\int_{\Omega}U_{x,\lambda}^{2^\star-2}\theta_{x,\lambda}^2\,\mathrm{d}y\bigg]\\
			\leq&C_4\bigg[\bigg(\int_{\Omega}|\Delta v|^2\,\mathrm{d}y\bigg)^{\frac{2^\star-(2^\star-3)_+}{2}}+\bigg(\int_{\Omega}|U_{x,\lambda}|^{\frac{2^\star(2^\star-2)}{2^\star-1}}\theta_{x,\lambda}^{\frac{2^\star}{2^\star-1}}\,\mathrm{d}y\bigg)^{\frac{2^\star-1}{2^\star}}\bigg(\int_{\Omega}|\Delta v|^2\,\mathrm{d}y\bigg)^{\frac12}+\int_{\Omega}U_{x,\lambda}^{2^\star-2}\theta_{x,\lambda}^2\,\mathrm{d}y\bigg],
		\end{aligned}
	\end{equation*}
	where we have used the Sobolev inequality and the fact $\|v\|\to0$ as $\varepsilon\to 0^+$ in the last inequality. By Lemma \ref{ALem1} and \eqref{A1.35}, we know 
	$$\bigg(\int_{\Omega}|U_{x,\lambda}|^{\frac{2^\star(2^\star-2)}{2^\star-1}}\theta_{x,\lambda}^{\frac{2^\star}{2^\star-1}}\,\mathrm{d}y\bigg)^{\frac{2^\star-1}{2^\star}}=o\big(\lambda d\big)^{\frac{4-n}{2}},$$
	$$\int_{\Omega}U_{x,\lambda}^{2^\star-2}\theta_{x,\lambda}^2\,\mathrm{d}y=o\big((\lambda d)^{4-n}\big).$$
	Therefore, we can achieve that as $\varepsilon\to 0^+$,
	$$	\bigg|\int_{\Omega}\bigg(|\alpha|^{-2^\star}|u_\varepsilon|^{2^\star}-PU_{x,\lambda}^{2^\star}-\frac{2^\star(2^\star-1)}{2}U_{x,\lambda}^{2^\star-2}v^2\bigg)\,\mathrm{d}y\bigg|=o\bigg(\int_{\Omega}|\Delta v|^2\,\mathrm{d}y+(\lambda d
	)^{4-n}\bigg),$$
	which means that \eqref{A3.2} holds true.
	
	We now prove \eqref{A3.3}. We write
	\begin{equation}\label{A3.5}
		|\alpha|^{-2}\int_{\Omega}Vu_\varepsilon^2\,\mathrm{d}y=\int_{\Omega}VPU_{x,\lambda}^2\,\mathrm{d}y+2\int_{\Omega}VPU_{x,\lambda}v\,\mathrm{d}y+\int_{\Omega}Vv^2\,\mathrm{d}y.\end{equation}
	Applying the H\"older inequality and the Sobolev inequality, we have
	$$
		\bigg|\int_{\Omega}VPU_{x,\lambda}v\,\mathrm{d}y\bigg|\leq \mathcal{S}_2^{-\frac{1}{2}}\bigg(\int_{\Omega}|V|PU_{x,\lambda}^2\,\mathrm{d}y\bigg)^{\frac{1}{2}}|V|_{L^{\frac{n}{4}}(\Omega)}^{\frac{1}{2}}\|v\|$$
	and 
	$$\bigg|\int_{\Omega}Vv^2\,\mathrm{d}y\bigg|\leq|V|_{L^{\frac{n}{4}}(\Omega)}|v|_{L^{2^\star}(\Omega)}^2\leq \mathcal{S}_2^{-1}|V|_{L^{\frac{n}{4}}(\Omega)}\|v\|^2. $$
	Inserting the above estimates into \eqref {A3.5} yields \eqref {A3.3}.
	
	Finally, by the homogeneity of the quotient functional $S_{\varepsilon V}[u]$, we can see that $S_{\varepsilon V}[u_\varepsilon]=S_{\varepsilon V}[|\alpha|^{-1}u_\varepsilon]$. Consequently, a straightforward Taylor expansion of $S_{\varepsilon V}[u_\varepsilon]$, together with \eqref{A3.1}–\eqref{A3.3}, yields \eqref{A3.4}.
\end{proof}

We shall use the following coercivity inequality from \cite[Proposition 2]{Konig2023} to control the remainder terms and refine the expansion of $u_\varepsilon$.
\begin{Prop}\label{Prop3.3}
	Let $v\in T_{x,\lambda}^\perp$. Then
	\begin{equation*}
		\int_{\Omega}|\Delta v|^2\,\mathrm{d}y-(2^\star-1)\int_{\Omega}U_{x,\lambda}^{2^\star-2}v^2\,\mathrm{d}y\geq\frac{8}{n+6}\int_{\Omega}|\Delta v|^2\,\mathrm{d}y.
	\end{equation*}
\end{Prop}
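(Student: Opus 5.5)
The plan is to reduce the inequality to the spectral gap of the linearized operator $\Delta^2-(2^\star-1)U_{x,\lambda}^{2^\star-2}$ on $\mathbb{R}^n$. After that, the remaining task is to transfer this gap to $\Omega$ with the Navier constraint.

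\textbf{Step 1: the constant on $\mathbb{R}^n$.} Consider the weighted eigenvalue problem $\Delta^2\varphi=\mu\,U_{x,\lambda}^{2^\star-2}\varphi$ on $\mathbb{R}^n$. Stereographic projection conjugates it to the Paneitz operator $P_4$ on $\mathbb{S}^n$, which is diagonalised by spherical harmonics. Degree $k$ gives
$$\mu_k=\frac{\Gamma(k+\frac n2+2)\,\Gamma(\frac n2-2)}{\Gamma(k+\frac n2-2)\,\Gamma(\frac n2+2)}.$$
So $\mu_0=1$ with eigenfunction $U$, $\mu_1=2^\star-1$ with eigenspace spanned by $\partial_\lambda U,\partial_{x_i}U$, and $\mu_2=\frac{(n+4)(n+6)}{(n-4)(n-2)}$. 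A direct computation gives $1-\frac{2^\star-1}{\mu_2}=1-\frac{n-2}{n+6}=\frac{8}{n+6}$, which is exactly the constant in the statement. The goal is therefore to show $\int_\Omega|\Delta v|^2\ge\mu_2\int_\Omega U_{x,\lambda}^{2^\star-2}v^2$ for $v\in T_{x,\lambda}^\perp$.

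\textbf{Step 2: translating the orthogonality.} Integration by parts, using the defining equation of $PU$, gives $(v,PU_{x,\lambda})=\int_\Omega U^{2^\star-1}v$. The same argument applied to the derivatives, with $\Delta^2\partial U=(2^\star-1)U^{2^\star-2}\partial U$, gives $(v,\partial PU)=(2^\star-1)\int_\Omega U^{2^\star-2}\partial U\,v$. Hence $v\in T_{x,\lambda}^\perp$ means that $v$, extended by zero, is orthogonal in $L^2(\mathbb{R}^n,U^{2^\star-2}dy)$ to the eigenspaces of $\mu_0$ and $\mu_1$. This is the correct orthogonality for the spectral argument: expanding $v=\sum_{k\ge2}\varphi_k$ in eigenfunctions gives $\int U^{2^\star-2}v^2=\sum_{k\ge 2}\|\varphi_k\|^2_{L^2(U^{2^\star-2})}$.

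\textbf{Step 3: the main obstacle, comparing $\int_\Omega|\Delta v|^2$ with the $\mathbb{R}^n$ energy.} The zero extension of $v\in H^2\cap H^1_0$ is not in $D^{2,2}(\mathbb{R}^n)$ in general, because $\partial_\nu v$ may jump. So $\int_\Omega|\Delta v|^2\ge\sum_k\mu_k\|\varphi_k\|^2$ cannot be read off directly. I would first try a duality argument. Let $h\in D^{2,2}(\mathbb{R}^n)$ solve $\Delta^2h=\mu_2U^{2^\star-2}v$; by Step 2, $\int_{\mathbb{R}^n}|\Delta h|^2\le \mu_2\int U^{2^\star-2}v^2$. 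Then
$$\mu_2\int U^{2^\star-2}v^2=\int_\Omega v\,\Delta^2 h=\int_\Omega\Delta v\,\Delta h-\oint_{\partial\Omega}\partial_\nu v\,\Delta h.$$
Cauchy--Schwarz would close the argument if the boundary term had a favourable sign. The sign should come from the maximum principle structure of the Navier problem. With $f=-\Delta v$ and $v=(-\Delta_D)^{-1}f$, one has $|v|\le(-\Delta_{\mathbb{R}^n})^{-1}|f|$, and $G_\Omega\le |x-y|^{4-n}$ by \eqref{A1.17}. This kernel domination lets one realise the quadratic form $f\mapsto\int U^{2^\star-2}(G_\Omega f)^2$ as a compression of the corresponding form on $\mathbb{R}^n$.

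If the boundary-sign route fails, I would argue at the level of $f$. Write $\int_\Omega U^{2^\star-2}v^2=\langle f,K_\Omega f\rangle_{L^2(\Omega)}$ and compare with $K_{\mathbb{R}^n}$ restricted to $L^2(\Omega)$. The constraint from Step 2 makes $v$ orthogonal to the top $n+2$ eigenvectors of $K_{\mathbb{R}^n}$ after the change of variables $f\leftrightarrow v$. The claim to establish is that on this subspace $K_\Omega\le \mu_2^{-1}$ as forms. This is the step I expect to be genuinely delicate, and it is presumably where the argument of \cite[Proposition 2]{Konig2023} does its real work; I would follow that reference for it.

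Once $\int_\Omega|\Delta v|^2\ge\mu_2\int_\Omega U_{x,\lambda}^{2^\star-2}v^2$ is in hand, subtracting $(2^\star-1)$ times it gives the stated bound with constant $1-(2^\star-1)/\mu_2=\frac{8}{n+6}$.
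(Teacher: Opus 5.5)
Your Steps 1 and 2 are correct. The constant satisfies $\frac{8}{n+6}=1-\frac{2^\star-1}{\mu_2}$, and the Navier conditions give the exact weighted relations $\int_\Omega U_{x,\lambda}^{2^\star-1}v=\int_\Omega U_{x,\lambda}^{2^\star-2}\partial_\lambda U_{x,\lambda}\,v=\int_\Omega U_{x,\lambda}^{2^\star-2}\partial_{x_i}U_{x,\lambda}\,v=0$.

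The problem is Step 3. You rightly identify it as the whole difficulty for $\mathcal{H}=H^2(\Omega)\cap H_0^1(\Omega)$, but it is not carried out, and neither route you sketch works.

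In route (a), the boundary term $\oint_{\partial\Omega}\partial_\nu v\,\Delta h$ has no sign. The relation $(v,PU_{x,\lambda})=\int_\Omega U_{x,\lambda}^{2^\star-1}v=0$ forces $v$ to change sign, so $U^{2^\star-2}v$ and $\Delta h$ change sign too. The maximum principle says nothing about sign-changing data.

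Route (b) fails on two counts.
\begin{enumerate}
\item The pointwise bound $|v|\le(-\Delta)^{-1}|f|$ only controls $\int U^{2^\star-2}v^2$ by the $\mathbb{R}^n$-form evaluated at $|f|$. Passing to $|f|$ destroys the orthogonality, so you recover only the bottom eigenvalue $\mu_0=1$ (essentially the Sobolev inequality), not a gap at $\mu_2$. Pointwise kernel domination does not give the form inequality $K_\Omega\le K_{\mathbb{R}^n}$ on sign-changing $f$.
\item The constraints do not make $f=-\Delta v$ orthogonal to the top eigenvectors $-\Delta U$, $-\Delta\partial U$ of $K_{\mathbb{R}^n}$. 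The actual constraint is $f\perp-\Delta PU,\,-\Delta\partial PU$, and Green's formula gives
\[
\int_\Omega \Delta v\,\Delta U_{x,\lambda}\,\mathrm{d}y=\int_\Omega U_{x,\lambda}^{2^\star-1}v\,\mathrm{d}y+\oint_{\partial\Omega}\partial_\nu v\,\Delta U_{x,\lambda}=\oint_{\partial\Omega}\partial_\nu v\,\Delta U_{x,\lambda},
\]
which need not vanish.
\end{enumerate}
So the min-max/compression argument has neither the form comparison nor the right subspace.

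Deferring this step to \cite[Proposition 2]{Konig2023} is exactly what the paper does: it gives no argument and imports the inequality. But then your Steps 1--2 are context, not a proof.

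There is a self-contained way to close the gap in the form the paper actually uses. Corollary \ref{cor3.4} needs $I[v]\ge 4c\|v\|^2$ only along sequences with $\lambda d\to\infty$ (Proposition \ref{Prop1}), and a compactness argument gives that.
\begin{enumerate}
\item Take $v_k\in T^\perp_{x_k,\lambda_k}$ with $\|v_k\|=1$ and $\lambda_k d(x_k)\to\infty$. Set $w_k(z)=\lambda_k^{-\frac{n-4}{2}}v_k(x_k+z/\lambda_k)$, extended by zero.
\item The rescaled domains contain $B_{\lambda_k d(x_k)}(0)$, so interior $H^2$ estimates and the Sobolev bound on $\mathcal{H}$ give $w_k\to w$ weakly in $L^{2^\star}$ and strongly in $L^2_{loc}$. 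The limit satisfies $w\in D^{2,2}(\mathbb{R}^n)$ and $\int|\Delta w|^2\le 1$.
\item Since $U_{0,1}^{2^\star-2}\in L^{n/4}(\mathbb{R}^n)$, the tails are uniformly small, so $\int U_{0,1}^{2^\star-2}w_k^2\to\int U_{0,1}^{2^\star-2}w^2$.
\item The exact orthogonality relations of your Step 2 pass to the limit, because $U_{0,1}^{2^\star-1}$ and $U_{0,1}^{2^\star-2}\partial U_{0,1}$ lie in $L^{\frac{2n}{n+4}}$.
\item Your Step 1 then gives $\limsup_k\int U_{x_k,\lambda_k}^{2^\star-2}v_k^2\le\mu_2^{-1}$, i.e. the inequality with constant $\frac{8}{n+6}-o(1)$.
\end{enumerate}
The boundary, and with it the zero-extension problem, disappears in the limit. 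This proves the asymptotic rather than the exact statement, but that is all the paper needs.
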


\begin{cor}\label{cor3.4}
	For any $\varepsilon>0$ small enough, if $n=8$, we have
	\begin{equation}\label{kele}
		\begin{aligned}
			0\geq(1+o(1))(\mathcal{S}_2-S(\varepsilon V))+&\frac{1}{\mathcal{S}_2\lambda^{4}}\bigg(\mathfrak {a}_8c_8^{4}R(x)+\varepsilon \mathfrak {b}_8c_8^2V(x)\log\lambda\bigg)\\
			&+c\int_{\Omega}|\Delta v|^2\,\mathrm{d}y+o(\varepsilon\lambda^{-4}\log\lambda)+o\big((\lambda d)^{-4}\big),
		\end{aligned}
	\end{equation}
	and, if $n\geq 9$, we have
	\begin{equation}\label{baoshifu}
		\begin{aligned}
			0\geq(1+o(1))(\mathcal{S}_2-S(\varepsilon V))+&\mathcal{S}_2^{1-\frac{n}{4}}\bigg(\frac{\mathfrak {a}_nc_n^{2^\star}R(x)}{{\lambda^{n-4}}}+\varepsilon \mathfrak {b}_nc_n^2V(x)\lambda^{-4}\bigg)\\
			&+c\int_{\Omega}|\Delta v|^2\,\mathrm{d}y+o(\varepsilon\lambda^{-4})+o\big((\lambda d)^{4-n}\big),
		\end{aligned}
	\end{equation}
	where $c$ is some positive constants.
\end{cor}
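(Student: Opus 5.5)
The plan is to combine the expansion \eqref{A3.4} of Proposition \ref{homogeneity} with the coercivity estimate of Proposition \ref{Prop3.3} and the almost-minimizing assumption \eqref{A1.1}. First, \eqref{A1.1} gives $S_{\varepsilon V}[u_\varepsilon]=S(\varepsilon V)+o(\mathcal{S}_2-S(\varepsilon V))$. Next, \eqref{A3.4} is rewritten as
\begin{equation*}
S_{\varepsilon V}[u_\varepsilon]-S_{\varepsilon V}[PU_{x,\lambda}]=I[v]+O\Big(\varepsilon\|v\|\Big(\int_\Omega|V|PU_{x,\lambda}^2\,\mathrm{d}y\Big)^{1/2}\Big)+o\big(\|v\|^2+(\lambda d)^{4-n}\big).
\end{equation*}
Since $\int_\Omega U_{x,\lambda}^{2^\star}\,\mathrm{d}y=\mathcal{S}_2^{n/4}+o(1)$ by \eqref{A1.16} and \eqref{nanjing}, Proposition \ref{Prop3.3} gives $I[v]\geq c_1\|v\|^2$ for some $c_1>0$. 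The term $o(\|v\|^2)$ is then absorbed into $c_1\|v\|^2$.

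The main obstacle is the cross term $\varepsilon\|v\|(\int|V|PU^2)^{1/2}$. I would handle it with Young's inequality:
\begin{equation*}
\varepsilon\|v\|\Big(\int_\Omega|V|PU_{x,\lambda}^2\Big)^{1/2}\leq \delta\|v\|^2+C_\delta\varepsilon^2\int_\Omega|V|PU_{x,\lambda}^2\,\mathrm{d}y.
\end{equation*}
The $\delta\|v\|^2$ part is absorbed, leaving $c\|v\|^2$ with $c>0$. To bound the last term, apply \eqref{A1.10+} with $|V|$ in place of $V$. This gives $\int|V|PU^2=O(\lambda^{-4}\log\lambda)$ for $n=8$ and $O(\lambda^{-4})$ for $n\geq9$, up to $O((\lambda d)^{4-n})$ errors. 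Hence $\varepsilon^2\int|V|PU^2=o(\varepsilon\lambda^{-4}\log\lambda)$, respectively $o(\varepsilon\lambda^{-4})$, plus $\varepsilon^2 O((\lambda d)^{4-n})=o((\lambda d)^{4-n})$.

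One technical point remains: \eqref{A1.10+} was proved under $d(x)\lambda\to\infty$ with $x$ possibly approaching the boundary. For $|V|$, the inequality $0\leq PU\leq U$ gives directly $\int|V|PU^2\leq|V|_\infty\int U^2$. This is $O(\lambda^{-4}\log\lambda)$ or $O(\lambda^{-4})$ uniformly, so the uniformity issue does not arise.

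Finally, substitute \eqref{A1.10+++} for $S_{\varepsilon V}[PU_{x,\lambda}]$, which is valid because $\lambda d\to\infty$. This yields
\begin{equation*}
S(\varepsilon V)+o(\mathcal{S}_2-S(\varepsilon V))\geq \mathcal{S}_2+(\text{main bracket})+c\|v\|^2+o(\varepsilon\lambda^{-4}\log\lambda)+o\big((\lambda d)^{-4}\big),
\end{equation*}
and similarly for $n\geq9$. Moving $S(\varepsilon V)$ to the right-hand side gives $0\geq(1+o(1))(\mathcal{S}_2-S(\varepsilon V))+\cdots$, which is exactly \eqref{kele} and \eqref{baoshifu}.
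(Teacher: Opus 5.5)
Your proposal is correct and follows essentially the same route as the paper. You use the coercivity estimate of Proposition~\ref{Prop3.3} to get $I[v]\geq c_1\|v\|^2$ and absorb the $o(\|v\|^2)$ remainder of \eqref{A3.4}, split the cross term with Young's inequality into $c\|v\|^2+C\varepsilon^2\int_\Omega|V|PU_{x,\lambda}^2\,\mathrm{d}y$, and then combine \eqref{A1.1} with \eqref{A1.10+++}. Your bound $\int_\Omega|V|PU_{x,\lambda}^2\,\mathrm{d}y\leq|V|_{L^\infty(\Omega)}\int_\Omega U_{x,\lambda}^2\,\mathrm{d}y$ via $0\leq PU_{x,\lambda}\leq U_{x,\lambda}$, used in place of citing \eqref{A1.10+}, is a harmless simplification that sidesteps any uniformity concern.
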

\begin{proof}
	By the definition of $I[v]$, it follows from Proposition \ref{Prop3.3} that, for $\varepsilon$ sufficiently small, there exists a positive constant $c$ such that
	$$I[v]\geq4c\int_{\Omega}|\Delta v|^2\,\mathrm{d}y.$$
	By substituting this result into \eqref {A3.4}, one has
	$$\begin{aligned}
		S_{\varepsilon V}[u_\varepsilon]\geq S_{\varepsilon V}&[PU_{x,\lambda}]+2c\int_{\Omega}|\Delta v|^2\,\mathrm{d}y+O\bigg(\varepsilon\sqrt{\int_{\Omega}|\Delta v|^2\,\mathrm{d}y}\sqrt{\int_{\Omega}|V|PU_{x,\lambda}^2\,\mathrm{d}y}\bigg)+o\big((\lambda d
		)^{4-n}\big).
	\end{aligned}$$
	Then using the Young inequality, we have
	$$\varepsilon\sqrt{\int_{\Omega}|\Delta v|^2\,\mathrm{d}y}\sqrt{\int_{\Omega}|V|PU_{x,\lambda}^2\,\mathrm{d}y}\leq c\int_{\Omega}|\Delta v|^2\,\mathrm{d}y+\frac{\varepsilon^2}{4c}\int_{\Omega}|V|PU_{x,\lambda}^2\,\mathrm{d}y.$$
	Therefore,
	$$	S_{\varepsilon V}[u_\varepsilon]\geq S_{\varepsilon V}[PU_{x,\lambda}]+c\int_{\Omega}|\Delta v|^2\,\mathrm{d}y+O\bigg(\varepsilon^2\int_{\Omega}|V|PU_{x,\lambda}^2\,\mathrm{d}y\bigg)+o\big((\lambda d
	)^{4-n}\big),$$
	which together with \eqref {A1.10+}, we obtain
	\begin{equation*}
		S_{\varepsilon V}[u_\varepsilon]\geq
		\left\lbrace
		\begin{aligned}
			S_{\varepsilon V}[PU_{x,\lambda}]+c\int_{\Omega}|\Delta v|^2\,\mathrm{d}y+o\big(\varepsilon\lambda^{-4}\log\lambda\big)+o\big((\lambda d\big)^{-4})&\hspace{9mm}\text{if}\hspace{2mm}n=8,\\
			S_{\varepsilon V}[PU_{x,\lambda}]+c\int_{\Omega}|\Delta v|^2\,\mathrm{d}y+o\big(\varepsilon\lambda^{-4}\big)+o\big((\lambda d
			\big)^{4-n})&\hspace{9mm}\text{if}\hspace{2mm}n\geq9.
		\end{aligned}
		\right.
	\end{equation*}
	By the assumption \eqref{A1.1}, we have $\mathcal{S}_2-S_{\varepsilon V}[u_\varepsilon]=(1+o(1))(\mathcal{S}_2-S(\varepsilon V))$. Combining this with \eqref{A1.10+++}, we can complete the proof.
\end{proof}

\section{Proof Of The Main Results}
In this section, we prove Theorems \ref{thm1} and \ref{thm2}. We begin by establishing a key lemma which asserts that the limit $x_0$ (defined in \eqref{dingyi}) lies in $\mathcal{N}(V)$. This lemma plays a crucial role in the proof of our main results.
\begin{lem}\label{lem3.5}
	We have $x_0\in\mathcal{N}(V)$. In particular, $d=O(1) $ and $x\in\mathcal{N}(V)$ as $\varepsilon\rightarrow0^+$.
\end{lem}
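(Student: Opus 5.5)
We argue by contradiction, using the two-sided information already at hand. Corollary \ref{cor2.2} gives $\mathcal{S}_2-S(\varepsilon V)>0$, and in fact bounds it below by $e^{-\frac{c_8^2}{10\Phi_8\varepsilon}(1+o(1))}$ if $n=8$ and by $\mathfrak{C}_n\Phi_n\varepsilon^{\frac{n-4}{n-8}}(1+o(1))$ if $n\geq9$. Corollary \ref{cor3.4} supplies the opposing inequality \eqref{kele} or \eqref{baoshifu}. In these inequalities every term is nonnegative except the potential term $\varepsilon V(x)\lambda^{-4}$ (times $\log\lambda$ if $n=8$) and the error terms. The plan is to show that if $x_0\notin\mathcal{N}(V)$, then the potential term is too small to balance the positive quantity $\mathcal{S}_2-S(\varepsilon V)$ together with the Robin term $R(x)\lambda^{4-n}$.

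\textbf{Step 1: the Robin term controls the boundary error.} By \eqref{A3.1+}, $R(x)\lambda^{4-n}\gtrsim (\lambda d)^{4-n}$. Hence the error $o((\lambda d)^{4-n})$ in \eqref{baoshifu} is absorbed, and for small $\varepsilon$ we get
\[
0\geq \tfrac12(\mathcal{S}_2-S(\varepsilon V))+c\,(\lambda d)^{4-n}+\mathcal{S}_2^{1-\frac n4}\varepsilon\mathfrak{b}_nc_n^2V(x)\lambda^{-4}+o(\varepsilon\lambda^{-4}).
\]
The case $n=8$ is analogous, with $\log\lambda$ inserted. Consequently $V(x)<0$ for small $\varepsilon$, and more precisely $V(x)\lambda^{-4}\varepsilon\leq -c\,\big((\lambda d)^{4-n}+(\mathcal{S}_2-S(\varepsilon V))\big)+o(\varepsilon\lambda^{-4})$. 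This already gives $x=x_\varepsilon\in\mathcal{N}(V)$ eventually. It also gives $\limsup V(x_\varepsilon)\le0$, hence $V(x_0)\le 0$ whenever $x_0\in\Omega$.

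\textbf{Step 2: ruling out $V(x_0)=0$ and $x_0\in\partial\Omega$.} Suppose $|V(x)|\to0$; this covers both $V(x_0)=0$ and, via continuity of $V$ on $\overline\Omega$, part of the boundary case. Write $t=\varepsilon\lambda^{-4}$ (for $n=8$, $t=\varepsilon\lambda^{-4}\log\lambda$). Step 1 then yields $c(\lambda d)^{4-n}+\tfrac12(\mathcal{S}_2-S(\varepsilon V))\le o(t)$.

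For $n\geq9$, $d$ is bounded, so $(\lambda d)^{4-n}\gtrsim \lambda^{4-n}$. Optimizing over $\lambda$, we get $t\lesssim \varepsilon^{\frac{n-4}{n-8}}\,\big(t^{-1}\lambda^{4-n}\big)^{\frac{4}{n-8}}$ up to constants. Equivalently, any $\lambda$ satisfies $\varepsilon\lambda^{-4}-C\lambda^{4-n}\le C'\varepsilon^{\frac{n-4}{n-8}}$, so $o(t)$ combined with the $\lambda^{4-n}$ penalty gives $\mathcal{S}_2-S(\varepsilon V)=o(\varepsilon^{\frac{n-4}{n-8}})$. This contradicts the lower bound $\mathfrak{C}_n\Phi_n\varepsilon^{\frac{n-4}{n-8}}$ from Corollary \ref{cor2.2}, since $\Phi_n>0$.

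For $n=8$, the bound $\lambda^{-4}\lesssim o(\varepsilon\lambda^{-4}\log\lambda)$ forces $\varepsilon\log\lambda\to\infty$. Then $\mathcal{S}_2-S(\varepsilon V)\le o(\varepsilon\lambda^{-4}\log\lambda)$ is super-exponentially small in $1/\varepsilon$, again contradicting Corollary \ref{cor2.2}.

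\textbf{Step 3: the boundary case in general.} Suppose $d\to0$ while $V(x_0)<0$. Here I would use the sharper inequality $\varepsilon|V(x)|\lambda^{-4}\gtrsim (\lambda d)^{4-n}$, i.e. $\varepsilon\gtrsim \lambda^{8-n}d^{4-n}$. Maximizing the net gain $\varepsilon|V|\lambda^{-4}-c\,d^{4-n}\lambda^{4-n}$ over $\lambda$ gives at most $C\varepsilon^{\frac{n-4}{n-8}}d^{\frac{4(n-4)}{n-8}}\cdot d^{-\frac{4(n-4)}{n-8}}\cdots$. Doing the algebra carefully, the maximum scales like $\varepsilon^{\frac{n-4}{n-8}}\,|V(x)|^{\frac{n-4}{n-8}}R(x)^{-\frac{4}{n-8}}$, which is at most $\varepsilon^{\frac{n-4}{n-8}}|V(x)|^{\frac{n-4}{n-8}}d^{\frac{4(n-4)}{n-8}}\to 0$ relative to $\varepsilon^{\frac{n-4}{n-8}}$. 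The same contradiction with Corollary \ref{cor2.2} follows. For $n=8$ the analogous quantity is $\exp(-c\,R(x)/(\varepsilon|V(x)|))$ with $R(x)\to\infty$, which is again too small.

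I expect the main obstacle to be the bookkeeping of the $o(\cdot)$ error terms in Steps 2–3, especially for $n=8$ with its logarithms, and making sure that absorbing $o((\lambda d)^{-4})$ into the Robin term is legitimate uniformly. Once $x_0\in\mathcal{N}(V)$ is established, $d$ is bounded below because $x_0$ is an interior point, and $x\in\mathcal{N}(V)$ holds eventually because $\mathcal{N}(V)$ is open.
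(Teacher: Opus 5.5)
Your argument is correct and is essentially the paper's. Both play the lower bound of Corollary~\ref{cor2.2} against \eqref{kele}/\eqref{baoshifu}, absorb the $o((\lambda d)^{4-n})$ error into the Robin term via $R(x)\gtrsim d^{4-n}$, and optimize over the concentration scale; the paper just writes everything in the variable $\lambda d$ with a coefficient $B\propto d^4|V(x)|$, so that one lower bound on $B$ gives $d\gtrsim 1$ and $V(x_0)<0$ at once, which is your Steps 2--3 combined.

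One small correction: Step 1 yields only $V(x_\varepsilon)\le o(1)$, not $V(x_\varepsilon)<0$, because the error $o(\varepsilon\lambda^{-4})$ is of the same order as the potential term. This is harmless, since you never use it and you correctly recover $x_\varepsilon\in\mathcal{N}(V)$ at the end from $x_0\in\mathcal{N}(V)$ and the openness of $\mathcal{N}(V)$.
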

\begin{proof}
	We first consider the case $n=8$. By \eqref{A1.39}, we may drop the non-negative term in \eqref{kele} as
	\begin{equation}\label{A3.8}
		\begin{aligned}
			0\geq&\frac{1}{\mathcal{S}_2\lambda^{4}}\big(\mathfrak {a}_8c_8^{4}R(x)+\varepsilon \mathfrak {b}_8c_8^{2}V(x)\log\lambda\big)+o\big(\varepsilon\lambda^{-4}\log\lambda\big)+o\big((\lambda d)^{-4}\big)\\
			=&A(\lambda d)^{-4}-B\varepsilon(\lambda d)^{-4}\log(\lambda d),
		\end{aligned}
	\end{equation}
	where $A=\frac{\mathfrak {a}_8c_8^4d^4R(x)}{\mathcal{S}_2}+o(1)$ and $B=-\frac{\mathfrak {b}_8c_8^{2}d^4\log\lambda}{\mathcal{S}_2\log\lambda d}\big(V(x_0)+o(1)\big)$.
	By \eqref{A3.1+}, it follows that $A$ is positive and bounded away from zero. Hence, from \eqref{A3.8} we can derive $B>0$. Optimizing in $\lambda d$ yields the lower bound
	\begin{equation*}
		A(\lambda d)^{-4}-\varepsilon B(\lambda d)^{-4}\log(\lambda d)\geq-\frac{\varepsilon B}{4e}e^{-\frac{4A}{\varepsilon B}}=-e^{\log\frac{\varepsilon B}{4e}-\frac{4A}{\varepsilon B}}.
	\end{equation*}
	By \eqref{A1.39} again, it follows that there exists $\rho>0$ such that
	\begin{equation}\label{A3.10}
		(1+o(1))\big(\mathcal{S}_2-S(\varepsilon V)\big)\geq e^{-\frac{\rho}{\varepsilon}},
	\end{equation}
	together with \eqref{A3.8}-\eqref{A3.10}, we have
	$$0\geq e^{-\frac{\rho}{\varepsilon}}-e^{\log\frac{\varepsilon B}{4e}-\frac{4A}{\varepsilon B}}.$$
	Therefore,
	\begin{equation}\label{A3.11}
		-\rho\leq\varepsilon\log\frac{\varepsilon B}{4e}-\frac{4A}{B}.
	\end{equation}
	Notice that $B$ is bounded, so the right-hand side of the above inequality can be rewritten as
	\begin{equation}\label{A3.12}
		-\frac{4A}{B}\bigg(1-\frac{\varepsilon B \log B}{4A}\bigg)+\varepsilon\log\frac{\varepsilon}{4e}=-\frac{4A}{B}(1-o(1))+o(1).
	\end{equation}
	By \eqref{A3.11} and \eqref{A3.12}, we obtain $B\geq\frac{A}{\rho}$, which means that $B$ is bounded away from zero. Hence, $d$ is bounded away from zero and $V(x_0)<0$. Moreover, by the continuity of $V$, we can deduce that for $\varepsilon$ small enough, $V(x)<0$, i.e. $x\in\mathcal{N}(V)$. 
	
	Now we treat the case $n\geq 9$ using similar reasoning. In view of \eqref{A1.39} and \eqref{baoshifu}, we get
	\begin{equation}\label{A3.8+}
		\begin{aligned}
			0\geq&\mathcal{S}_2^{1-\frac{n}{4}}\bigg(\frac{\mathfrak {a}_nc_n^{2^\star}R(x)}{{\lambda^{n-4}}}+\varepsilon \mathfrak {b}_nc_n^{2}V(x)\lambda^{-4}\bigg)+o(\varepsilon\lambda^{-4})+o\big((\lambda d)^{4-n}\big)\\
			=&A(\lambda d)^{4-n}-B\varepsilon(\lambda d)^{-4},
		\end{aligned}
	\end{equation}
	where $A=\mathcal{S}_2^{1-\frac{n}{4}}\mathfrak {a}_nc_n^{2^\star}d^{n-4}R(x)+o(1)$ and $B=-\mathcal{S}_2^{1-\frac{n}{4}}\mathfrak {b}_nc_n^2d^4V(x_0)+o(1)$.
	Since $R(x)\gtrsim d^{4-n}$, we know that $A$ is positive and bounded away from zero. Hence, from \eqref{A3.8+} we derive $B>0$. Optimizing in $\lambda d$ yields the lower bound
	\begin{equation*}
		A(\lambda d)^{4-n}-\varepsilon B(\lambda d)^{-4}\geq-\hat{c}A^{-\frac{4}{n-8}}(\varepsilon B)^{\frac{n-4}{n-8}},
	\end{equation*}
	where $\hat{c}:=\frac{n-8}{n-4}\big(\frac{4}{n-4}\big)^{\frac{4}{n-8}}>0$.
	By \eqref{A1.39}, it follows that there exists $\rho>0$ such that 
	\begin{equation}\label{A3.10+}
		(1+o(1))\big(\mathcal{S}_2-S(\varepsilon V)\big)\geq \rho \varepsilon^{\frac{n-4}{n-8}},
	\end{equation}
	which together with \eqref{A3.8+}-\eqref{A3.10+} yields
	\begin{equation*}
		B\geq\rho^{\frac{n-8}{n-4}}\bigg(\frac{n-4}{n-8}\bigg)^{\frac{n-8}{n-4}}\bigg(\frac{A(n-4)}{4}\bigg)^{\frac{4}{n-4}}>0.
	\end{equation*}
	Thus, $B$ is bounded away from zero, which implies that $d$ is bounded away from zero and $V(x_0)<0$. Moreover, the fact that $x\in\mathcal{N}(V)$ for $\varepsilon$ small enough is a consequence of the continuity of $V$. The proof is complete.
\end{proof}

We are now in a position to prove Theorem \ref{thm1}. 
\begin{proof}[Proof of Theorem 1.1]
	We first consider the case $n=8$. By Lemma \ref{lem3.5}, the lower bound \eqref{kele} can be rewritten as (upon dropping the non-negative term)
	\begin{equation*}
		\begin{aligned}
			0&\geq(1+o(1))(\mathcal{S}_2-S(\varepsilon V))+\frac{1}{\mathcal{S}_2\lambda^{4}}\bigg(\mathfrak {a}_8c_8^{4}\big(R(x_0)+o(1)\big)+\varepsilon \mathfrak {b}_8c_8^2\big(V(x_0)+o(1)\big)\log\lambda\bigg)\\
			&\geq(1+o(1))(\mathcal{S}_2-S(\varepsilon V))-\frac{\varepsilon \mathfrak {b}_8c_8^2\big(|V(x_0)+o(1)|\big)}{4e\mathcal{S}_2}e^{-\frac{c_8^2(R(x_0)+o(1))}{10\varepsilon(|V(x_0)+o(1)|)}}.
		\end{aligned}
	\end{equation*}
	Therefore,
	\begin{equation*}
		S(\varepsilon V)\geq \mathcal{S}_2-e^{-\frac{c_8^2 R(x_0)}{10\varepsilon|V(x_0)|}(1+o(1))}\geq \mathcal{S}_2-e^{-\frac{c_8^2 }{10\varepsilon\Phi_8}(1+o(1))},
	\end{equation*}
	where the first inequality follows from \eqref{A1.39} and the second inequality follows from the definition of $\Phi_8$. Since the matching upper bound has already been established in Corollary \ref{cor2.2}, the proof in the case $n=8$ is complete.
	
	Next, we consider the case $n\geq 9$. Similarly, in view of Lemma \ref{lem3.5}, the lower bound \eqref{baoshifu} can be rewritten as
	\begin{equation*}
		\begin{aligned}
			0&\geq(1+o(1))(\mathcal{S}_2-S(\varepsilon V))+\mathcal{S}_2^{1-\frac{n}{4}}\bigg(\frac{\mathfrak {a}_nc_n^{q}\big(R(x_0)+o(1)\big)}{{\lambda^{n-4}}}+\varepsilon \mathfrak {b}_nc_n^2\big(V(x_0)+o(1)\big)\lambda^{-4}\bigg)\\
			&\geq(1+o(1))(\mathcal{S}_2-S(\varepsilon V))-\mathfrak{C}_n\big(R(x_0)+o(1)\big)^{-\frac{4}{n-8}}|V(x_0)+o(1)|^{\frac{n-4}{n-8}}\varepsilon^{\frac{n-4}{n-8}}.
		\end{aligned}
	\end{equation*}
    Therefore,
	\begin{equation*}\begin{aligned}
			S(\varepsilon V)&\geq \mathcal{S}_2-\mathfrak{C}_n\big(R(x_0)+o(1)\big)^{-\frac{4}{n-8}}|V(x_0)+o(1)|^{\frac{n-4}{n-8}}\varepsilon^{\frac{n-4}{n-8}}\\
			&\geq \mathcal{S}_2-\mathfrak{C}_n\Phi_n\varepsilon^{\frac{n-4}{n-8}}+o(\varepsilon^{\frac{n-4}{n-8}}),
		\end{aligned}
	\end{equation*}
	which together with Corollary \ref{cor2.2}, completes the proof in the case $n\geq9$.
\end{proof}

We now prove Theorem \ref{thm2} following the argument in \cite{FKK2020}.
\begin{proof}[Proof of Theorem 1.2]
	We first treat the case $n=8$. By Lemma \ref{lem3.5}, we can rewrite \eqref{kele} as follows
	\begin{equation*}
		0\geq(1+o(1))(\mathcal{S}_2-S(\varepsilon V))
		-\frac{\varepsilon B_\varepsilon}{4e}e^{-\frac{4A_\varepsilon}{\varepsilon B_\varepsilon}}
		+\widetilde{R},
	\end{equation*}	
	where $$\widetilde{R}=\bigg(\frac{A_\varepsilon}{\lambda^4}-\frac{\varepsilon B_\varepsilon\log\lambda}{\lambda^4}+\frac{\varepsilon B_\varepsilon}{4e}e^{-\frac{4A_\varepsilon}{\varepsilon B_\varepsilon}}\bigg)+c\int_{\Omega}|\Delta v|^2\,\mathrm{d}y,$$
	and
	$$A_\varepsilon=\frac{\mathfrak {a}_8c_8^4\big(R(x_0)+o(1)\big)}{\mathcal{S}_2},\quad B_\varepsilon=\frac{\mathfrak {b}_8c_8^2|V(x_0)+o(1)|}{\mathcal{S}_2}.$$
	By virtue of Corollary \ref{cor2.2}, we deduce that
	\begin{equation}\label{A4.8}
		0\geq(1+o(1))e^{-\frac{c_8^2}{10\varepsilon\Phi_8}(1+o(1))}
		-\frac{\varepsilon B_\varepsilon}{4e}e^{-\frac{4A_\varepsilon}{\varepsilon B_\varepsilon}}
		+\widetilde{R}.
	\end{equation}
	Dropping the positive term $\widetilde{R}$ and taking the logarithm on both sides of \eqref{A4.8}, we have
	$$-\frac{4A_\varepsilon}{\varepsilon B_\varepsilon}+\log\frac{\varepsilon B_\varepsilon}{4e}\geq-\frac{c_8^2}{10\varepsilon\Phi_8}(1+o(1))+\log(1+o(1)).$$
	Moreover, multiplying both sides by $\varepsilon$ and then taking the limit, one can see
	$$-\frac{R(x_0)}{|V(x_0)|}\geq-\Phi_8^{-1}.$$
	It follows from the definition of $\Phi_8$ that 
	\begin{equation}\label{A4.9}
		R(x_0)^{-1}|V(x_0)|=\Phi_8.
	\end{equation}
	In view of \eqref{A1.41} and \eqref{A4.9}, we drop the first term on the right hand side of \eqref{A4.8} and retain only the term $c\int_{\Omega}|\Delta v|^2\,\mathrm{d}y$ from $\widetilde{R}$, which yields
	\begin{equation}\label{A4.10}
		\int_{\Omega}|\Delta v|^2\,\mathrm{d}y\leq\frac{\varepsilon B_{\varepsilon}}{4ec}e^{-\frac{c_8^2}{10\varepsilon\Phi_8}(1+o(1))}.
	\end{equation}
	If we drop the last term of $\widetilde{R}$ and multiply both sides of \eqref{A4.8} by $\frac{4e}{\varepsilon B_\varepsilon}e^{\frac{4A_\varepsilon}{\varepsilon B_\varepsilon}}$, we obtain
	$$\begin{aligned}
		1-(1+o(1))\frac{4e}{\varepsilon B_\varepsilon}e^{\frac{4A_\varepsilon}{\varepsilon B_\varepsilon}-\frac{c_8^2}{10\varepsilon\Phi_8}(1+o(1))}&\geq\frac{4e}{\varepsilon B_\varepsilon}e^{\frac{4A_\varepsilon}{\varepsilon B_\varepsilon}}\widetilde{R}\\
		&\geq\frac{4e}{\varepsilon B_\varepsilon}e^{\frac{4A_\varepsilon}{\varepsilon B_\varepsilon}}\bigg(\frac{A_\varepsilon}{\lambda^4}-\frac{\varepsilon B_\varepsilon\log\lambda}{\lambda^4}\bigg)+1\\
		&=1+ye^{y+1}
	\end{aligned}$$
	with $y=\frac{4A_\varepsilon}{\varepsilon B_\varepsilon}-4\log\lambda$. We note that the following equality holds
	$$(1+o(1))\frac{4e}{\varepsilon B_\varepsilon}e^{\frac{4A_\varepsilon}{\varepsilon B_\varepsilon}-\frac{c_8^2}{10\varepsilon\Phi_8}(1+o(1))}=e^{o(\frac{1}{\varepsilon})},$$
    and therefore
	$$-e^{o(\frac{1}{\varepsilon})}\geq ye^{y+1}.$$
	This implies
	$$0<-y\leq o(\frac{1}{\varepsilon}),$$
	which is the same as
	$$\frac{A_\varepsilon}{\varepsilon B_\varepsilon}<\log\lambda\leq\frac{A_\varepsilon}{\varepsilon B_\varepsilon}+ o(\frac{1}{\varepsilon}).$$
	Thanks to \eqref{A4.9}, we deduce that
	\begin{equation}\label{A4.11}
		\lambda=e^{\frac{c_8^2}{40\varepsilon\Phi_8}(1+o(1))}.
	\end{equation}
	Finally, we combine \eqref{A1.1}, \eqref{A1.10++}, \eqref{A3.2}, \eqref{A4.10} and \eqref{A4.11} to obtain
	\begin{equation*}
		|\alpha|^{-2^\star}\mathcal{S}_2^{2}=\mathcal{S}_2^{2}-2^\star\mathfrak {a}_8c_8^{2^\star}R(x_0)\lambda^{-4}+\frac{2^\star(2^\star-1)}{2}\int_{\Omega}U_{x,\lambda}^{2^\star-2}v^2\,\mathrm{d}y+o(\lambda ^{-4}).
	\end{equation*}
	Applying the H\"older inequality and the Sobolev inequality, we get
	\begin{equation}\label{nongfu}
		\int_{\Omega}U_{x,\lambda}^{2^\star-2}v^2\,\mathrm{d}y\lesssim\int_{\Omega}|\Delta v|^2\,\mathrm{d}y,
	\end{equation}
	which together with \eqref{A4.10}-\eqref{nongfu}, yields
	$$|\alpha|=1+e^{-\frac{c_8^2}{10\varepsilon\Phi_8}(1+o(1))}.$$

	Next, we treat the case $n\geq9$. Similarly, using Lemma \ref{lem3.5}, the lower bound \eqref{baoshifu} can be rewritten as $$0\geq(1+o(1))(\mathcal{S}_2-S(\varepsilon V))-\mathfrak{C}_n\big(R(x_0)+o(1)\big)^{-\frac{4}{n-8}}|V(x_0)+o(1)|^{\frac{n-4}{n-8}}\varepsilon^{\frac{n-4}{n-8}}+\widetilde{R},$$
	where $$\widetilde{R}=\bigg(\frac{A_\varepsilon}{\lambda^{n-4}}-\frac{\varepsilon B_\varepsilon}{\lambda^4}+\hat{c}A_\varepsilon^{-\frac{4}{n-8}}(\varepsilon B_\varepsilon)^{\frac{n-4}{n-8}}\bigg)+c\int_{\Omega}|\Delta v|^2\,\mathrm{d}y$$
	and $$A_\varepsilon=\mathcal{S}_2^{1-\frac{n}{4}}\mathfrak {a}_nc_n^{2^\star}\big(R(x_0)+o(1)\big),\quad B_\varepsilon=\mathcal{S}_2^{1-\frac{n}{4}}\mathfrak {b}_nc_n^2|V(x_0)+o(1)|.$$
	Notice that both summands of $\widetilde{R}$ are non-negative. In view of Corollary \ref{cor2.2}, we can see
	\begin{equation}\label{A4.1}
		0\geq \mathfrak{C}_n\bigg(\Phi_n-R(x_0)^{-\frac{4}{n-8}}|V(x_0)|^{\frac{n-4}{n-8}}\bigg)\varepsilon^{\frac{n-4}{n-8}}+\widetilde{R}+o(\varepsilon^{\frac{n-4}{n-8}}),
	\end{equation}
	which implies $R(x_0)^{-\frac{4}{n-8}}|V(x_0)|^{\frac{n-4}{n-8}}\geq\Phi_n$. From the definition of $\Phi_n$, we derive $$R(x_0)^{-\frac{4}{n-8}}|V(x_0)|^{\frac{n-4}{n-8}}=\Phi_n.$$ Furthermore, combining this with \eqref{A4.1}, we have \begin{equation}\label{A4.2}
		\widetilde{R}=o(\varepsilon^{\frac{n-4}{n-8}}).
	\end{equation}
	In particular, \eqref{A4.2} implies that
	\begin{equation}\label{A4.3}
		\int_{\Omega}|\Delta v|^2\,\mathrm{d}y=o(\varepsilon^{\frac{n-4}{n-8}}).
	\end{equation}
	Using Lemma \ref{ALem2} and \eqref{A4.2}, we obtain
	$$\varepsilon^{\frac{n-6}{n-8}}\big(\lambda^{-1}-\lambda_0^{-1}(\varepsilon)\big)^2=o(\varepsilon^{\frac{n-4}{n-8}}),$$
	which implies 
	\begin{equation}\label{A4.4}
		\lambda=\lambda_0+o(\varepsilon^{-\frac{1}{n-8}})=\bigg(\frac{(n-4)\mathfrak {a}_nc_n^{2^\star-2}R(x_0)}{4 \mathfrak {b}_n|V(x_0)|}\bigg)^{\frac{1}{n-8}}\varepsilon^{-\frac{1}{n-8}}+o(\varepsilon^{-\frac{1}{n-8}}),
	\end{equation}
	where $\lambda_0=\lambda_0(\varepsilon)=\big(\frac{(n-4)A_\varepsilon}{4\varepsilon B_\varepsilon}\big)^{\frac{1}{n-8}}$ such that the first summand of $\widetilde{R}$ is equal to zero.
	
	From \eqref{A1.1}, \eqref{A1.10++}, \eqref{A3.2}, \eqref{nongfu}, \eqref{A4.3} and \eqref{A4.4}, it follows that
	\begin{equation}\label{A4.5}
		|\alpha|^{-2^\star}\mathcal{S}_2^{\frac{n}{4}}=\mathcal{S}_2^{\frac{n}{4}}-2^\star\mathfrak {a}_n c_n^{2^\star}R(x_0)\lambda^{4-n}+o(\lambda ^{4-n}).
	\end{equation}
	Hence, combining \eqref{A4.3}-\eqref{A4.5} yields
	$$|\alpha|=1+\mathfrak{D}_n\Phi_n\varepsilon^{\frac{n-4}{n-8}}+o(\varepsilon^{\frac{n-4}{n-8}}).$$
	We complete the proof in the case $n\geq9$.
\end{proof}

\appendix
\section{Auxiliary Results}
In this appendix, we present some detailed computations in our proofs.
\begin{lem}\label{ALem1} Let $x=x_\lambda$ be a sequence of points in $\Omega$ such that $\lambda d(x)\to+\infty$. Then
	\begin{equation*}
		\bigg(\int_{\Omega}|U_{x,\lambda}|^{\frac{2^\star(2^\star-2)}{2^\star-1}}\theta_{x,\lambda}^{\frac{2^\star}{2^\star-1}}\,\mathrm{d}y\bigg)^{\frac{2^\star-1}{2^\star}}=
		\left\lbrace
		\begin{aligned}
			O\big((\lambda d(x))^{4-n}\big)&\hspace{9mm}\text{if}\hspace{2mm}5<n<12,\\
			O\big((\lambda d(x))^{-8}(\log\lambda d(x))^{\frac{2}{3}}\big)&\hspace{9mm}\text{if}\hspace{2mm}n=12,\\
			O\big((\lambda d(x))^{-\frac{4+n}{2}}\big)&\hspace{9mm}\text{if}\hspace{2mm}n>12.
		\end{aligned}
		\right.
	\end{equation*}
	
\end{lem}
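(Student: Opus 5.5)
The plan is to estimate the integral
\[
J:=\int_{\Omega}U_{x,\lambda}^{p}\,\theta_{x,\lambda}^{q}\,\mathrm{d}y,\qquad p=\frac{2^\star(2^\star-2)}{2^\star-1}=\frac{16n}{(n-4)(n+4)},\quad q=\frac{2^\star}{2^\star-1}=\frac{2n}{n+4},
\]
by splitting $\Omega$ into $B_\delta(x)$ and $\Omega\setminus B_\delta(x)$ with $\delta=d(x)/2$, just as in the proof of \eqref{A1.35}. We then raise the result to the power $\frac{2^\star-1}{2^\star}=\frac{n+4}{2n}$.

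\emph{Inner region.} By \eqref{A1.13}, \eqref{A1.14} and \eqref{A1.17}, on $B_\delta(x)$ we have $\theta_{x,\lambda}\lesssim \lambda^{\frac{4-n}{2}}d^{4-n}$. We also use $U_{x,\lambda}^p=c_n^p\lambda^{\frac{8n}{n+4}}(1+\lambda^2|y-x|^2)^{-\frac{8n}{n+4}}$. Scaling $z=\lambda(y-x)$ gives
\[
\int_{B_\delta}U^p\theta^q\lesssim(\lambda d)^{\frac{(4-n)2n}{n+4}}\,\lambda^{\frac{8n}{n+4}-n}\lambda^{\frac{(n-4)n}{n+4}}\int_{|z|<\lambda\delta}\frac{\mathrm{d}z}{(1+|z|^2)^{\frac{8n}{n+4}}}.
\]
The powers of $\lambda$ outside collapse: $\frac{8n-n^2-4n+n^2-4n}{n+4}=0$. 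The remaining integral is governed by comparing $\frac{16n}{n+4}$ with $n$, i.e.\ $n$ with $12$. It is $O(1)$ if $n<12$, $O(\log\lambda d)$ if $n=12$, and $O((\lambda d)^{n-\frac{16n}{n+4}})$ if $n>12$. Raising to the power $\frac{n+4}{2n}$ gives:
\begin{itemize}
\item for $n<12$, the bound $(\lambda d)^{4-n}$;
\item for $n=12$, the bound $(\lambda d)^{-8}(\log\lambda d)^{2/3}$, since $\frac{n+4}{2n}=\frac23$;
\item for $n>12$, the bound $(\lambda d)^{4-n+\frac{n+4}{2}-8}=(\lambda d)^{-\frac{n+4}{2}}$.
\end{itemize}
These are exactly the three cases claimed.

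\emph{Outer region.} On $\Omega\setminus B_\delta(x)$ we apply H\"older with exponents $\frac{2^\star}{q}=2^\star-1$ and $\frac{2^\star}{p}=\frac{2^\star-1}{2^\star-2}$, which are conjugate. This gives
\[
J_{\mathrm{out}}\le|\theta|_{L^{2^\star}}^{q}\Big(\int_{\mathbb{R}^n\setminus B_\delta}U^{2^\star}\Big)^{\frac{p}{2^\star}}.
\]
We insert $|\theta|_{L^{2^\star}(\Omega)}=O((\lambda d)^{-\frac{n-4}{2}})$ from Proposition 2.1 of \cite{AH2006} and $\int_{|y-x|>\delta}U^{2^\star}=O((\lambda d)^{-n})$. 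After the outer power $\frac{n+4}{2n}$, this yields $(\lambda d)^{-\frac{n-4}{2}\cdot\frac{q(n+4)}{2n}-\frac{p(n+4)}{2}}=(\lambda d)^{-\frac{n-4}{2}-\frac{8n}{n-4}}$. A cruder alternative is to use $\theta\le U$ and $U\lesssim\lambda^{\frac{4-n}{2}}\delta^{4-n}$ pointwise there.

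\emph{Comparing the two regions.} The only point needing care is to check that the outer exponent is at least as strong as the inner bound in each regime. Let $m=n-4$. The first inequality needed is $\frac m2+\frac{8(m+4)}{m}\ge m$, i.e.\ $m^2\le16m+64$. This holds for $m\le 19.3$, which covers $n<12$ and $n=12$ (up to the harmless log factor). The second inequality needed, for $n>12$, is $\frac m2+\frac{8(m+4)}{m}\ge\frac{m+8}{2}$, i.e.\ $\frac{8(m+4)}{m}\ge4$, which always holds.

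If the H\"older bound turns out to be insufficient for large $n$ in the $n<12$ comparison, that case is irrelevant anyway since $n<12$ forces $m<8$. Summing the inner and outer contributions then completes the proof.
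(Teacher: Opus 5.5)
\textbf{Verdict: same approach as the paper, but the outer-region exponent is miscomputed; once corrected, the argument still proves the lemma.}

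Your argument follows the paper's proof. The paper splits off $B_{d}(x)$ and you split off $B_{d/2}(x)$, which makes no difference. Inside, both use the $L^\infty$ bound on $\theta_{x,\lambda}$ and rescale to get the three regimes $n<12$, $n=12$, $n>12$. Outside, both use H\"older with $|\theta_{x,\lambda}|_{L^{2^\star}(\Omega)}=O((\lambda d)^{\frac{4-n}{2}})$. Your inner-region computation is correct.

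\textbf{The error.} After raising to the power $\frac{n+4}{2n}=\frac1q$, the $U$-factor is $\big(\int_{\mathbb{R}^n\setminus B_\delta}U_{x,\lambda}^{2^\star}\big)^{\frac{p}{q\,2^\star}}$. With your $p,q$ one has $\frac{p}{q\,2^\star}=\frac{2^\star-2}{2^\star}=\frac4n$. This factor therefore contributes $(\lambda d)^{-4}$, not $(\lambda d)^{-\frac{8n}{n-4}}$; you dropped the division by $2^\star$. The correct outer bound is $(\lambda d)^{-\frac{n-4}{2}-4}=(\lambda d)^{-\frac{n+4}{2}}$, which is exactly what the paper obtains.

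\textbf{Consequence for your comparison paragraph.} The slack you find up to $m\approx 19.3$ is an artifact of the error, so that paragraph must be redone. With $m=n-4$, the correct condition is $\frac m2+4\ge m$, i.e.\ $n\le 12$.
\begin{itemize}
\item For $n<12$: $(\lambda d)^{-\frac{n+4}{2}}\le(\lambda d)^{4-n}$, so the outer term is dominated.
\item For $n=12$: the outer term is $(\lambda d)^{-8}$, which is absorbed by $(\lambda d)^{-8}(\log\lambda d)^{\frac23}$.
\item For $n>12$: the outer term equals the claimed bound $(\lambda d)^{-\frac{n+4}{2}}$ exactly.
\end{itemize}
So there is no room to spare: the outer region is critical at $n=12$ and matches the claimed bound for every $n>12$.

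\textbf{A remark on your alternative.} Your ``cruder alternative'' is actually cleaner than you suggest. Since $p+q=2^\star$ and $0\le\theta_{x,\lambda}\le U_{x,\lambda}$, you get directly $\int_{\Omega\setminus B_\delta}U_{x,\lambda}^p\theta_{x,\lambda}^q\le\int_{\mathbb{R}^n\setminus B_\delta}U_{x,\lambda}^{2^\star}=O((\lambda d)^{-n})$. This gives the same $(\lambda d)^{-\frac{n+4}{2}}$ without the pointwise bound on $U_{x,\lambda}$ or the $L^{2^\star}$ bound on $\theta_{x,\lambda}$.
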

\begin{proof}
	In the following, we write $d=d(x)$ for simplicity. It follows from \eqref{A1.13}, \eqref{A1.14} and \eqref{A1.17} that
	$$\begin{aligned}
		\int_{B_{d}(x)}U_{x,\lambda}^{\frac{2^\star(2^\star-2)}{2^\star-1}}\theta_{x,\lambda}^{\frac{2^\star}{2^\star-1}}\,\mathrm{d}y\leq&|\theta_{x,\lambda}|_{L^{\infty}(\Omega)}^{\frac{2^\star}{2^\star-1}}\int_{B_{d}(x)}U_{x,\lambda}^{\frac{2^\star(2^\star-2)}{2^\star-1}}\,\mathrm{d}y\\
		=&O\bigg(\big(d^{4-n}\lambda^{\frac{4-n}{2}}\big)^{\frac{2^\star}{2^\star-1}}\bigg)\int_{B_{d}(x)}U_{x,\lambda}^{\frac{2^\star(2^\star-2)}{2^\star-1}}\,\mathrm{d}y.
	\end{aligned}$$
	Since $\frac{2^\star(2^\star-2)}{2^\star-1}\frac{n-4}{2}=\frac{8n}{n+4}$, it holds that
	\begin{equation*}
		\begin{aligned}
			\int_{B_{d}(x)}U_{x,\lambda}^{\frac{2^\star(2^\star-2)}{2^\star-1}}\,\mathrm{d}y&=O(\lambda^{\frac{8n}{n+4}})\int_0^{d}\frac{r^{n-1}}{(1+\lambda^2r^2)^{\frac{8n}{n+4}}}\,\mathrm{d}r
			=O(\lambda^{\frac{4n-n^2}{n+4}})\int_0^{\lambda d}\frac{t^{n-1}}{(1+t^2)^{\frac{8n}{n+4}}}\,\mathrm{d}t\\
			&=O(\lambda^{\frac{4n-n^2}{n+4}})\bigg(\int_1^{\lambda d}t^{\frac{n^2-12n}{n+4}-1}\,\mathrm{d}t+O(1)\bigg).
		\end{aligned}
	\end{equation*}
    Then we consider three cases separately as follows
	$$\int_1^{\lambda d}t^{\frac{n^2-12n}{n+4}-1}\,\mathrm{d}t=
	\left\lbrace
	\begin{aligned}
		O(1)&\hspace{9mm}\text{if}\hspace{2mm}5<n<12,\\
		O\big(\log(\lambda d)\big)&\hspace{9mm}\text{if}\hspace{2mm}n=12,\\
		O\bigg(\big(\lambda d\big)^{\frac{n^2-12n}{n+4}}\bigg)&\hspace{9mm}\text{if}\hspace{2mm}n>12.
	\end{aligned}
	\right.
	$$
	Having established the estimate in $B_{d}(x)$, we proceed to bound the integral on the complement $\Omega\setminus B_{d}(x)$. Using $|\theta_{x,\lambda}|_{L^{2^\star}(\Omega)}=O(\lambda d)^{\frac{4-n}{2}}$ and the H\"older inequality, we get
	\begin{equation*}
		\begin{aligned}
			\bigg(\int_{\Omega\setminus B_{d}(x)}|U_{x,\lambda}|^{\frac{2^\star(2^\star-2)}{2^\star-1}}\theta_{x,\lambda}^{\frac{2^\star}{2^\star-1}}\,\mathrm{d}y\bigg)^{\frac{2^\star-1}{2^\star}}\leq&|\theta_{x,\lambda}|_{L^{2^\star}(\Omega)}|U_{x,\lambda}|_{L^{2^\star}(\mathbb{R}^n\setminus B_{d})}^{2^\star-2}\\
			=&O\big((\lambda d)^{\frac{4-n}{2}}\big)O\big((\lambda d)^{-4}\big).
		\end{aligned}
	\end{equation*}
	Hence, combining the estimates on $B_{d}(x)$ and $\Omega\setminus B_{d}(x)$, we conclude this proof.
\end{proof}
\begin{lem}\label{ALem2}
	Let $n\geq9$ and $f_\varepsilon:=(0,\infty)\to\mathbb{R}$ be given by
	$$f_\varepsilon(\lambda)=\frac{A_\varepsilon}{\lambda^{n-4}}-\frac{\varepsilon B_\varepsilon}{\lambda^4},$$
	where $A_\varepsilon, B_\varepsilon>0$ are uniformly bounded away from $0$ and $\infty$. Denote by
	$$\lambda_0=\lambda_0(\varepsilon)=\bigg(\frac{(n-4)A_\varepsilon}{4\varepsilon B_\varepsilon}\bigg)^{\frac{1}{n-8}}$$
	the unique global minimum of $f_\varepsilon$. Then there exists a positive constant $c_0$ such that, for any $\varepsilon$, we have 
	\begin{equation*}
		f_\varepsilon(\lambda)-f_\varepsilon(\lambda_0)\geq
		\left\lbrace
		\begin{aligned}
			c_0\varepsilon^{\frac{n-6}{n-8}}\big(\lambda^{-1}-\lambda_0^{-1}(\varepsilon)\big)^2&\hspace{9mm}\text{if}\hspace{2mm}\bigg(\frac{A_\varepsilon}{\varepsilon B_{\varepsilon}}\bigg)^{\frac{1}{n-8}}\lambda^{-1}\leq2\bigg(\frac{4}{n-4}\bigg)^{\frac{1}{n-8}},\\
			c_0\varepsilon^{\frac{n-4}{n-8}}&\hspace{9mm}\text{if}\hspace{2mm}\bigg(\frac{A_\varepsilon}{\varepsilon B_{\varepsilon}}\bigg)^{\frac{1}{n-8}}\lambda^{-1}>2\bigg(\frac{4}{n-4}\bigg)^{\frac{1}{n-8}}.
		\end{aligned}
		\right.
	\end{equation*}
\end{lem}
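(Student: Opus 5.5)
Write $\mu=\lambda^{-1}$ and $\mu_0=\lambda_0^{-1}$, and set $\kappa=(\varepsilon B_\varepsilon/A_\varepsilon)^{1/(n-8)}$. Since $A_\varepsilon,B_\varepsilon$ are bounded above and below, $\kappa\sim\varepsilon^{1/(n-8)}$. The plan is to rescale, which turns $f_\varepsilon$ into a fixed one-variable function times a power of $\varepsilon$. Substituting $\mu=\kappa t$ gives
$$f_\varepsilon(\lambda)=A_\varepsilon\kappa^{n-4}\big(t^{n-4}-t^4\big)=:A_\varepsilon\kappa^{n-4}g(t),$$
using $\varepsilon B_\varepsilon\kappa^4=A_\varepsilon\kappa^{n-4}$. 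The prefactor satisfies $A_\varepsilon\kappa^{n-4}\sim\varepsilon^{\frac{n-4}{n-8}}$. The fixed function $g(t)=t^{n-4}-t^4$ on $(0,\infty)$ has a unique critical point $t_0=(4/(n-4))^{1/(n-8)}$, which is its global minimum, with $g(t_0)<0$. This matches $\lambda_0$, since $\mu_0=\kappa t_0$. The condition distinguishing the two cases in the lemma is exactly $t\le 2t_0$ versus $t>2t_0$.

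\textbf{Case $t\le 2t_0$.} Here I use a quadratic lower bound on the compact interval $[0,2t_0]$: there is $c_1>0$ with
$$g(t)-g(t_0)\ge c_1(t-t_0)^2 \quad\text{for all } t\in[0,2t_0].$$
This follows because $g''(t_0)=4(n-4)t_0^{2}\big((n-5)t_0^{n-8}-3\big)/\ldots$. More simply, $g''(t_0)=t_0^2\big((n-4)(n-5)t_0^{n-8}-12\big)=t_0^2\big(4(n-5)-12\big)=4(n-8)t_0^2>0$. So the ratio $(g(t)-g(t_0))/(t-t_0)^2$ is continuous and positive on $[0,2t_0]$, extended by $g''(t_0)/2$ at $t_0$; it is positive elsewhere because the minimizer is unique (note $g(0)-g(t_0)>0$). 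By compactness the ratio has a positive minimum $c_1$. Scaling back,
$$f_\varepsilon(\lambda)-f_\varepsilon(\lambda_0)\ge A_\varepsilon\kappa^{n-4}c_1\kappa^{-2}(\mu-\mu_0)^2\gtrsim\varepsilon^{\frac{n-6}{n-8}}(\lambda^{-1}-\lambda_0^{-1})^2,$$
since $\kappa^{n-6}\sim\varepsilon^{\frac{n-6}{n-8}}$.

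\textbf{Case $t>2t_0$.} Here $g$ is increasing on $(t_0,\infty)$, so $g(t)-g(t_0)\ge g(2t_0)-g(t_0)>0$. This yields the bound $c_0\varepsilon^{\frac{n-4}{n-8}}$.

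Finally, take $c_0$ as the minimum of the two resulting constants; it depends only on $n$ and the uniform bounds on $A_\varepsilon,B_\varepsilon$. The only mild obstacle is verifying positivity of the quadratic ratio up to the endpoint $t=0$, and this is handled by the uniqueness of the minimizer together with $g''(t_0)>0$.
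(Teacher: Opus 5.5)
Your proof is correct and follows the paper's argument: the same rescaling $\lambda^{-1}=(\varepsilon B_\varepsilon/A_\varepsilon)^{1/(n-8)}t$ reduces $f_\varepsilon$ to a multiple of $F(t)=t^{n-4}-t^4$, and you then use a quadratic lower bound on $(0,2t_0]$ and a constant lower bound for $t>2t_0$. Your compactness argument for the ratio $(F(t)-F(t_0))/(t-t_0)^2$ and your explicit tracking of the factor $\kappa^{-2}$ make rigorous what the paper summarizes as ``by the Taylor expansion'' and ``by rescaling''; you should delete the stray incomplete formula for $g''(t_0)$ that precedes ``More simply''.
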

\begin{proof}
	Let $F(t):=t^{n-4}-t^4$. A straightforward computation shows that $t_0=\big(\frac{4}{n-4}\big)^{\frac{1}{n-8}}$ is the unique global minimum of $F$ on $(0,\infty)$. By the Taylor expansion, there exists $c>0$ such that 
	\begin{equation*}
		F(t)-F(t_0)\geq
		\left\lbrace
		\begin{aligned}
			c\big(t-t_0\big)^2&\hspace{9mm}\text{if}\hspace{2mm}0<t\leq2t_0,\\
			ct_0^{n-4}&\hspace{9mm}\text{if}\hspace{2mm}t>2t_0.
		\end{aligned}
		\right.
	\end{equation*}
	The assertion of the lemma now follows by rescaling. Indeed, it suffices to observe that
	$$f_\varepsilon=A_\varepsilon^{-\frac{4}{n-8}}(\varepsilon B_\varepsilon)^{\frac{n-4}{n-8}}F\bigg(\bigg(\frac{A_\varepsilon}{\varepsilon B_\varepsilon}\bigg)^{\frac{1}{n-8}}\lambda^{-1}\bigg)$$
	and to use the boundedness of $A_\varepsilon$ and $B_\varepsilon$.
\end{proof}
	 
	 \section*{Statements and Declarations}
	 
	 \textbf{Data Availability} Data sharing not applicable to this article as no datasets were generated or analysed during the current study.
	 
	 \textbf{Competing Interests} The authors have no competing interests to declare that are relevant to the content of this article.
	 
	 \textbf{Author Contribution declaration} The authors declare that they contribute to the research of this paper equally.
	 
	 \textbf{Funding declaration} Minbo Yang was partially supported by National Key Research and Development Program of China (No. 2022YFA1005700), National Natural Science Foundation of China (12471114) and the Natural Science Foundation of Zhejiang Province (LZ26A010002).
	\bibliographystyle{plain}

\end{document}